\numberwithin{equation}{section}
\newtheorem{theorem}{Theorem}[section]
\newtheorem{proposition}[theorem]{Proposition}
\newtheorem{remark}[theorem]{Remark}
\newtheorem{definition}{Definition}[section]
\newtheorem{claim}{Claim}[section]
\newtheorem{assumption}{Assumption}
\newtheorem{lemma}{Lemma}[section]
\newcommand{\<}{\langle}
\renewcommand{\>}{\rangle}
\renewcommand{\H}{\mathbb{H}}
\begin{document} 
\title{ Generalized eigenproblem and nonlinear elliptic
system with nonlinear boundary conditions}

%\author[M. N. Nkashama\hfil]
%{M. N. Nkashama}

%\address{M. N. Nkashama \hfill\break
%Department of Mathematics, University of Alabama at
%Birmingham \hfill\break
%Birmingham, Alabama 35294-1170, USA }
%\email{ nkashama@@math.uab.edu }

%\author[Alzaki.M.M. Fadlallah\hfil ]
%{Alzaki.M.M. Fadlallah}

%\address{Alzaki.M.M. Fadlallah \hfill\break
%Department of Mathematics, University of Alabama at
%Birmingham \hfill\break
%Birmingham, Alabama 35294-1170, USA }
%\email{ zakima99@@math.uab.edu }

%\date

\begin{abstract}
We will study solvability of nonlinear second-order elliptic system of partial
differential equations with nonlinear boundary conditions. We study the generalized
Steklov–Robin eigensystem (with possibly matrices weights) in which the spectral
parameter is both in the system and on the boundary. We prove the
existence of solutions for nonlinear system when both nonlinearities in the
differential system and on the boundary interact, in some sense, with the
generalized spectrum of Steklov-Robin. The method of proof makes use of Leray-Schauder degree.
\end{abstract}
\section{Notations definitions}
To put our results into the context, we have collected in this shore section some relevant notations and definitions for our purposes. 
Throughout this work, $H^{1}_{0},$ $H^{1}(\Omega)$ denotes the usual real Sobolev space of functions on $\Omega$. 
\begin{itemize}
 \item $k\in\mathbb{N}$
 \item $H(\Omega)=H^{1}(\Omega)\times H^{1}(\Omega)\times\cdots\times H^{1}(\Omega)\times H^{1}(\Omega)=[H^{1}(\Omega)]^{k}$
 \item $H_{0}(\Omega)=H^{1}_{0}(\Omega)\times H^{1}_{0}(\Omega)\times\cdots\times H^{1}_{0}(\Omega)\times H^{1}_{0}(\Omega)=[H^{1}_{0}(\Omega)]^{k}$
 \item $L^{p}_{k}(\Omega)=L^{p}(\Omega)\times L^{p}(\Omega)\times\cdots\times L^{p}(\Omega)\times L^{p}(\Omega)=[L^{p}(\Omega)]^{k}$
 \item  $L^{p}_{k}(\partial\Omega)=L^{p}(\partial\Omega)\times L^{p}(\partial\Omega)\times\cdots\times L^{p}(\partial\Omega)\times L^{p}(\partial\Omega)=[L^{p}(\partial\Omega)]^{k}$
\item $[W^{2}_{p}(\Omega)]^{k}=W^{2}_{p}(\Omega)\times W^{2}_{p}(\Omega)\times\cdots\times W^{2}_{p}(\Omega)\times W^{2}_{p}(\Omega)$
 \end{itemize}
\begin{definition}{\bf{Cooperative-plus}} 
Cooperative-plus matrix means that all the entries of the matrix are non-negative
\end{definition}

\maketitle
\section{Introduction}
This paper is concerned existence of (weak) solutions to the following nonlinear elliptic system 
\begin{equation}\label{Es1}
\begin{gathered}
 -\Delta U + A(x)U = f(x,U)\quad\text{in } \Omega,\\
\frac{\partial U}{\partial \nu}+\Sigma(x)U=g(x,U) \quad\text{on }
\partial\Omega,
\end{gathered}
\end{equation}

where $\Omega\subset\mathbb{R}^N$ , $N\geq 2$ is a bounded domain  with boundary
$\partial \Omega$ of class $C^{0,1}$,

$U=[u_1,\dots,u_k]^{T}\in H(\Omega)$
and the matrix 
$$A(x)=\left[
\begin{array}{cccc}
 a_{11}(x)&a_{12}(x)&\cdots&a_{1k}(x)\\ 
a_{21}(x)&a_{22}(x)&\cdots&a_{2k}(x)\\ 
\vdots & \vdots  &\ddots &\vdots  \\ 
a_{k1}(x)&a_{k2}(x)&\cdots&a_{kk}(x)
\end{array}\right]
$$\\
Verifies the following conditions:
\begin{enumerate}

\item[$(A1$)] The functions $a_{ij}:\Omega\to\mathbb{R},$ $a_{ij}(x)\ge 0,~~ \forall~ i,j=1,\cdots,k, ~~x\in\Omega$
with strict inequality on a set of positive measure of $\Omega$.
 \item[$(A2)$] $A(x)$ is positive semidefinite matrix on $\mathbb{R}^{{k}\times{k}},$ almost everywhere $x\in\Omega,$ and 
 $A(x)$ is positive definite on a set of positive measure of $\Omega$ with $a_{ij}\in L^{p}(\Omega)~~ \forall ~i,j=1,\cdots,k,$ 
for $p>\frac{N}{2}$ when $N\geq 3$, and $p>1$ when $N=2$
% \item A(x) is symmetric i.e.; $a_{ij}(x)=a_{ji}(x)$  forall $x\in\Omega$
 
 \end{enumerate}
 The function $F:\bar{\Omega}\times\mathbb{R}^{k}\to\mathbb{R}^{k},$ 
 $$f(x,U)=f(x,u_1,\cdots,u_{k})=\begin{bmatrix}
       f_{1}(x,U)\\[0.3em]
       \vdots\\
       f_{k}(x,U)
     \end{bmatrix}$$
 Satisfies  the following conditions:\\
 \begin{enumerate}
  \item[$(F1)$] $F$ is Carath\'{e}dory function 
  \item[$(F2)$] There exist constants $b1>0,$ such that
  $$|f(x,U)|\leq b_1(1+||U||_{L^{p}_{k}(\Omega)}^{q_{1}-1}),~~{\rm with}~~2\leq q_{1} \leq \frac{2N}{N-2}, $$
  where $||U||=\displaystyle\sum_{i=1}^{k}||u_{i}||_{L^{p}(\Omega)}$
 \end{enumerate}
$\partial/\partial\nu :=\nu\cdot\nabla$
is the outward (unit) normal derivative on $\partial\Omega$,
The matrix $\Sigma$ is 
$$\Sigma(x)=\left[
\begin{array}{cccc}
 \sigma_{11}(x)&\sigma_{12}(x)&\cdots&\sigma_{1k}(x)\\ 
\sigma_{21}(x)&\sigma_{22}(x)&\cdots&\sigma_{2k}(x)\\ 
\vdots & \vdots  &\ddots &\vdots  \\ 
\sigma_{k1}(x)&\sigma_{k2}(x)&\cdots&\sigma_{kk}(x)
\end{array}\right]
$$
Verifies the following conditions:\\

\begin{enumerate}

\item[$(S1)$] The functions $\sigma_{ij}:\partial\Omega\to\mathbb{R},$ $\sigma_{ij}(x)\ge 0,~~ \forall ~i,j=1,\cdots,k ~~x\in\partial\Omega$
\item[$(S2)$] $\Sigma(x)$ is positive semidefinite matrix on $\mathbb{R}^{{k}\times{k}},$ almost everywhere $x\in\partial\Omega,$
and  $\Sigma(x)$ is positive definite on a set of positive measure of $\partial\Omega$
  with $\sigma_{ij}\in L^{q}(\partial\Omega)~~ \forall~ i,j=1,\cdots,k,$ 
for $q\geq N-1$ when $N\geq 3$, and $q>1$ when $N=2$
% \item A(x) is symmetric i.e.; $a_{ij}(x)=a_{ji}(x)$  forall $x\in\Omega$

\begin{remark}
 \item Conditions $(A2),(S2)$ are equivalent to 
 $$\int_{\Omega}\<A(x)U,U\>\,dx+\int_{\partial\Omega}\<\Sigma(x)U,U\>\,dx > 0$$
\item We can put condition $(S2)$ in $A$ and condition $(A2)$ in $\Sigma$ i.e.;  interchange conditions in $A$ and $\Sigma$ 
 \end{remark}

\end{enumerate}
$G:\partial\Omega\times\mathbb{R}^{k}\to\mathbb{R}^{k},$ 
 $$g(x,U)=g(x,u_1,\cdots,u_{k})=\begin{bmatrix}
       g_{1}(x,U)\\[0.3em]
       \vdots\\
       g_{k}(x,U)
     \end{bmatrix}$$
 satisfies the following conditions
 \begin{enumerate}
  \item[$(G1)$] $G$ is Carath\'{e}dory function 
  \item[$(G2)$] There exist constants $a_1>0$ such that
  $$|g(x,U)|\leq a_1(1+||U||^{q_{2}-1}_{L^{p}_{k}(\partial\Omega)})~~{\rm with}~~2\leq q_{2} \leq \frac{2N}{N-2} ,$$
  where $$||U||{L^{p}_{k}(\partial\Omega)}=\sum_{i=1}^{k}||u_i||_{L^{p}(\partial\Omega)}.$$
  \item[$(G3)$] And for every constant
$r>0$ there is a constant $K=K(r)$ such that 
$$|g(x,U)-g(x,V)|\leq K(|x-y|+|U-V|)$$ for all $x,y\in \partial\Omega$ and all $U,V\in H(\Omega)$ with $|U|\leq r,$ and $|V|\leq r$
  \end{enumerate}
 Where the nonlinear reaction-function $f(x,U)$ and nonlinearity on the boundary $g(x,U)$ interact, in some sense, with the generalized Steklov-Robin
 spectrum of the following linear system problem (with possibly  matrices $(M,P)$-weights)
  
\begin{equation}\label{Es2}
\begin{gathered}
 -\Delta U+ A(x)U =\mu M(x)U \quad\text{in } \Omega,\\
\frac{\partial U}{\partial \nu}+\Sigma(x)U=\mu P(x)U \quad\text{on }
\partial\Omega,
\end{gathered}
\end{equation}
Where 
$$M(x)=\left[
\begin{array}{cccc}
 m_{11}(x)&m_{12}(x)&\cdots&m_{1k}(x)\\ 
m_{21}(x)&m_{22}(x)&\cdots&m_{2k}(x)\\ 
\vdots & \vdots  &\ddots &\vdots  \\ 
m_{k1}(x)&m_{k2}(x)&\cdots&m_{kk}(x)
\end{array}\right]
$$
and 
$$P(x)=\left[
\begin{array}{cccc}
 \rho_{11}(x)&\rho_{12}(x)&\cdots&\rho_{1k}(x)\\ 
\rho_{21}(x)&\rho_{22}(x)&\cdots&\rho_{2k}(x)\\ 
\vdots & \vdots  &\ddots &\vdots  \\ 
\rho_{k1}(x)&\rho_{k2}(x)&\cdots&\rho_{kk}(x)
\end{array}\right]
$$
Note that the eigensystem (\ref{Es2}) includes as special cases the weighted Steklov eigenproblem for a class of elliptic system when ( 
$\Sigma(x)=M(x)\equiv 0$, $A(x)\in\mathbb{R}^{2\times{2}}$ and  $P(x)=I$  where $I$ 
is identity matrix that was considered in \cite{GMR13}. For scalar  case 
generalized Steklov-Robin  spectrum of the scalar case that was considered in \cite{Mav12}
as well as weighted Robin-Neumann eigenproblem when ($P(x)\equiv 0$ and $M(x)=I$) that was considered in \cite{Auc12} and \cite{GMR13}. 
Under conditions of $A(x),\Sigma(x)$ together with the hypothesis on $\Omega$, we have 
\begin{itemize}

 \item The embedding of $H^{1}(\Omega)$ into $L^{p}(\Omega)$ is continuous for $1\leq p\leq p(N)$ and compact for 
 $1\leq p\leq p(N)$ where $p(N)=\frac{2N}{N-2}$ if $N\geq 3$ and $p(N)=\infty$ if $N=2$ (see \cite{EG} for more details)
   \item $\<x,y\>=\sum_{i=1}^{k}x_iy_i~~\forall~ x,y\in H(\Omega)$ then 
   
  \end{itemize}

The main difficulty is to lead with trace operator. In the scalar we would like to cite papers by 
\cite{GMR13}, \cite{Mav12}, \cite{MN}, \cite{MK2} and \cite{MK1}, while for the system case, to best of our knowledge, we not able to find any reference 
for (\ref{Es1}), which we describe herein for the first time.

\section{Generalized Steklov-Robin eigensystem}
In this section, we will study the generalized spectrum that will  be used for the comparison with the nonlinearities in the system (\ref{Es1}).
This spectrum includes the Steklov, Neumann and Robin spectra, We therefore generalize the results in \cite{Auc12}, and \cite{GMR13}\\.
Consider the elliptic system 
\begin{equation}\label{Es3}
\begin{gathered}
 -\Delta U+ A(x)U =\mu M(x)U \quad\text{in } \Omega,\\
\frac{\partial U}{\partial \nu}+\Sigma(x)U=\mu P(x)U \quad\text{on }
\partial\Omega,
\end{gathered}
\end{equation}
Where 
$$M(x)=\left[
\begin{array}{cccc}
 m_{11}(x)&m_{12}(x)&\cdots&m_{1k}(x)\\ 
m_{21}(x)&m_{22}(x)&\cdots&m_{2k}(x)\\ 
\vdots & \vdots  &\ddots &\vdots  \\ 
m_{k1}(x)&m_{k2}(x)&\cdots&m_{kk}(x)
\end{array}\right]
$$
and 
$$P(x)=\left[
\begin{array}{cccc}
 \rho_{11}(x)&\rho_{12}(x)&\cdots&\rho_{1k}(x)\\ 
\rho_{21}(x)&\rho_{22}(x)&\cdots&\rho_{2k}(x)\\ 
\vdots & \vdots  &\ddots &\vdots  \\ 
\rho_{k1}(x)&\rho_{k2}(x)&\cdots&\rho_{kk}(x)
\end{array}\right]
$$
\begin{enumerate}
 \item[(M1)] Where $M(x)$ is positive definite matrix on $\mathbb{R}^{{k}\times{k}}$, almost everywhere $x\in\Omega,$ 
The functions $m_{ij}:\Omega\to\mathbb{R},$ $m_{ij}(x)\ge 0,~~ \forall~ i,j=1,\cdots,k ~~x\in\Omega,$
with strict inequality on a set of positive measure of $\Omega$, $m_{ij}\in L^{p}(\Omega)~~ \forall~ i,j=1,\cdots,k,$ 
 for $p\geq \frac{N}{2}$ when $N\geq 3$, and $p>1$ when $N=2.$ 
\item[(P1)] Where $P(x)$ is positive semidefinite matrix on $\mathbb{R}^{{k}\times{k}},$ almost everywhere $x\in\partial\Omega,$ and positive define
 on a set of positive measure of $\partial\Omega$ with $\rho_{ij}\in L^{q}(\partial\Omega)~~ \forall i,j=1,\cdots,k,$ 
for $q\geq N-1$ when $N\geq 3$, and $q>1$ when $N=2.$
\item[(MP)] And $(m_{ij},\rho_{ij})>0$; that is  $m_{ij}(x)\geq 0$ a.e in $\Omega$, $\rho_{ij}(x)\geq 0$ a.e on $\partial\Omega$ for all $i,j=1,\cdots,k$
such that $$\int_{\Omega}m_{ij}(x)dx+\int_{\partial\Omega}\rho_{ij}(x)dx >0 ~\forall~i,j=1,\cdots,k,$$
 
\end{enumerate}
Assume that $A(x),~\Sigma(x),~M(x),~P(x)$ are Verifies the following assumption :\\
\begin{assumption}\label{ASMP}
 \begin{enumerate}
$A(x)$ is positive definite on a set of positive measure of $\Omega$ with $A(x)\in L^{p}(\Omega)$ 
for $p>\frac{N}{2}$ when $N\geq 3$, and $p>1$ when $N=2.$ 
\item [OR] 
  $\Sigma(x)$ is positive definite on a set of positive measure of $\partial\Omega$
  with $\Sigma(x)\in L^{q}(\partial\Omega)$
for $q\geq N-1$ when $N\geq 3$, and $q>1$ when $N=2$
\item[AND] 
$M(x)$ is positive definite on a set of positive measure of $\Omega$ with $M(x)\in L^{p}(\Omega)$ 
for $p>\frac{N}{2}$ when $N\geq 3$, and $p>1$ when $N=2.$ 
\item[OR]
$P(x)$ is positive definite on a set of positive measure of $\partial\Omega$
  with $P(x)\in L^{q}(\partial\Omega)$
for $q\geq N-1$ when $N\geq 3$, and $q>1$ when $N=2.$
\end{enumerate}
\end{assumption}
\begin{remark}\label{rm1}
 Since $A(x),~\Sigma(x),~M(x),~P(x)$ are satisfied $(A2),~ (S2),~ (M1), ~(P1)$ respectively and they are Cooperative-plus matrices,
 then we can write them in following from 
 (i.e.;  eigen-decomposition of a positive semi-definite matrix or diagonalization)
 $$A(x)=Q_{A}^{T}(x)D_{A}(x)Q_{A}(x).$$
$$\Sigma(x)=Q_{\Sigma}^{T}(x)D_{\Sigma}(x)Q_{\Sigma}(x).$$
$$M(x)=Q_{M}^{T}(x)D_{M}(x)Q_{M}(x).$$
$$P(x)=Q_{P}^{T}(x)D_{P}(x)Q_{P}(x).$$
Where $Q_{J}(x)^{T}Q_{J}(x)=I$ ($Q_{J}^{T}(x)=Q_{J}^{-1}(x)$ i.e.; are orthogonal matrix ) are the normalized eigenvectors,  $I$ is identity matrix, $D_{J}(x)$ is diagonal
matrix in the diagonal of $D_{J}(x)$ the eigenvalues of $J(x)$ (i.e.; $D(x)_{J}=diag\left(\lambda_{1}^{J}(x),\cdots,\lambda_{k}^{J}(x)\right)$)
and $J(x)=\{A(x),~\Sigma(x),~M(x),~P(x)\}$
 \end{remark}

\begin{remark}
 The weight matrices $M(x)$ and $P(x)$ may vanish on subset of positive measure. 
\end{remark}
\begin{definition}
 The generalized Steklov-Robin eigensystem is to find a pair $(\mu,\varphi)\in\mathbb{R}\times H(\Omega)$ with 
 $\varphi\not\equiv 0$ such that 
 \begin{equation}\label{Es4}
\begin{gathered}
 \int_{\Omega}\triangledown\varphi.\triangledown U\,dx+\int_{\Omega}\<A(x)\varphi,U\>\,dx+\int_{\partial\Omega}\<\Sigma(x)\varphi,U\>\,dx= \\
 \mu\left[\int_{\Omega}\<M(x)\varphi,U\>\,dx+\int_{\partial\Omega}\<P(x)\varphi,U\>\,dx\right] ~~\forall~~U\in H(\Omega)
\end{gathered}
\end{equation}
\end{definition}
\begin{remark}
 Let $U=\varphi$ in (\ref{Es4}), if there is such an eigenpair, then 
$\mu>0$ and 
$$\int_{\Omega}\<M(x)\varphi,\varphi\>\,dx+\int_{\partial\Omega}\<P(x)\varphi,\varphi\>\,dx >0$$
$$\left(i.e; \sum_{j=1}^{k}\sum_{i=1}^{k}\left(\int_{\Omega}m_{ij}(x)\varphi_{j}\varphi_{i}\,dx+\int_{\partial\Omega}\rho_{ij}(x)\varphi_{j}\varphi_{i}\,dx\right)>0\right)$$ and 
\end{remark}
\begin{remark}
If $\int_{\Omega}\<M(x)\varphi,\varphi\>\,dx+\int_{\partial\Omega}\<P(x)\varphi,\varphi\>\,dx =0.$
Then 
$$\int_{\Omega}|\triangledown\varphi|^{2}\,dx+\int_{\Omega}\<A(x)\varphi,\varphi\>\,dx+\int_{\partial\Omega}\<\Sigma(x)\varphi,\varphi\>\,dx=0$$
We have that 
$\int_{\Omega}|\triangledown\varphi|^{2}\,dx=0$ this implies that $\varphi={\rm~ constant}$
and 

$\int_{\Omega}\<A(x)\varphi,\varphi\>\,dx=0$ this implies that $A(x)=0,~a.e.$  in $\Omega$ and 
$\int_{\partial\Omega}\<\Sigma(x)\varphi,\varphi\>\,dx =0,$ then  $\Sigma(x)=0~a.e$ on $\partial\Omega.$
So we have that, $\varphi$ would be a constant vector function; which would contradict the assumptions (assumption \ref{ASMP})  imposed on 
$A(x)$ and $\Sigma(x)$
\end{remark}
\begin{remark}
If $A(x)\equiv 0$ and $\Sigma\equiv 0$ then $\mu=0$ is an eigenvalue of the system (\ref{Es3}) with eigenfunction 
$\varphi={\rm constant~ vector ~function}$ on $\bar{\Omega},$
\end{remark}
It is therefore appropriate to consider the closed linear subspace of $H(\Omega)$ under assumption \ref{ASMP} defined by 
$$\H_{(M,P)}(\Omega):=\{U\in H(\Omega):\int_{\Omega}\<M(x)U,U\>\,dx+\int_{\partial\Omega}\<P(x)U,U\>\,dx=0\}.$$
Now all the  eigenfunctions associated with (\ref{Es4}) belongs  to the $(A,\Sigma)-$orthogonal complement $H_{(M,P)}(\Omega):=[\H_{(M,P)}(\Omega)]^{\perp}$ 
$$i.e.;H_{(M,P)}(\Omega)=[\H_{(M,P)}(\Omega)]^{\perp}=$$
$$\{U\in H(\Omega):\int_{\Omega}\<M(x)U,U\>\,dx+\int_{\partial\Omega}\<P(x)U,U\>\,dx>0~{\rm and} ~~\<U,V\>_{(A,\Sigma)}=0, \forall~V\in \H_{(M,P)}(\Omega)\}$$
of this subspace in $H(\Omega)$
\begin{definition}
 $\Omega(M):=\{x\in\Omega:M(x) >0\}.$ \\ 
  $\partial\Omega(P):=\{x\in\partial\Omega:P(x) >0\}.$ 
\end{definition}
We will show that indeed the  $H_{(M,P)}(\Omega)$ is subspace of $H(\Omega).$
Let $U,~ V\in H_{(M.P)}(\Omega)$ and $\alpha\in\mathbb{R}$ we show that $\alpha U\in H_{(M.P)}(\Omega)$ and $U+V\in H_{(M.P)}(\Omega) $
$$\left(\int_{\Omega}\< M(x)(\alpha U),\alpha U\>\,dx+\int_{\partial\Omega}\<P(x)(\alpha U),\alpha  U\>\,dx\right)=$$
$$\alpha^{2}\left(\int_{\Omega}\<M(x)U,U\>\,dx+\int_{\partial\Omega}\<P(x)U,U\>\,dx\right)=^{U\in H_{(M.P)}}0$$
Therefore $\alpha U \in H_{(M.P)}(\Omega)$. Now we show that $U+V\in H_{(M.P)}(\Omega) $
$$\int_{\Omega}\< M(x)(U+V),(U+V)\>\,dx+\int_{\partial\Omega}\<P(x)(U+V),(U+V)\>\,dx=$$
$$\int_{\Omega}\<M(x)U,U\>\,dx+\int_{\partial\Omega}\<P(x)U,U\>\,dx+\int_{\Omega}\<M(x)V,V\>\,dx+\int_{\partial\Omega}\<P(x)V,V\>\,dx$$
$$+2\int_{\Omega}\< M(x)U,V\>\,dx+2\int_{\partial\Omega}\<P(x)U,V)\>\,dx$$
{\bf{Case 1}}\\
If $x\in  \partial\Omega(P),$  this implies that   $U=0$ on $\partial\Omega(P)$
\begin{itemize}
\item $M(x)=0$  on $\Omega$ (Steklov problem)
\item $M(x)>0$ on set of positive measure of $\Omega$ this implies that $U=0$ on $\Omega$
Now we use  and \ref{ASMP}, \ref{rm1} we have that 
$$0=\int_{\Omega}\<M(x)U,U\>\,dx=\int_{\Omega}\<Q^{T}_{M}(x)D_{M}(x)Q_{M}(x)U,U\>\,dx=
\int_{\Omega}\<D_{M}(x)Q_{M}(x)U,Q_{M}(x)U\>\,dx
$$ 
Since $D_{M}(x)>0$ this implies that $Q_{M}(x)U=0$ since $Q_{M}(x)$ is invertible this implies that 
$U=0$ on $\Omega(M)$
\end{itemize}
{\bf{Case 2}}\\
If $x\in \Omega(M),$  this implies that   $U=0$ on $\Omega$
\begin{itemize}
\item $P(x)=0$  on $\partial\Omega$ (usual spectrum problem ) 
\item $P(x)>0$ on set of positive measure of $\partial\Omega$ this implies that $U=0$ on $\partial\Omega$
Now we use  and \ref{ASMP}, \ref{rm1} we have that 
$$0=\int_{\partial\Omega}\<P(x)U,U\>\,dx=\int_{\partial\Omega}\<Q^{T}_{P}(x)D_{P}(x)Q_{P}(x)U,U\>\,dx=
\int_{\partial\Omega}\<D_{P}(x)Q_{P}(x)U,Q_{P}(x)U\>\,dx
$$ 
Since $D_{P}(x)>0$ this implies that $Q_{P}(x)U=0$ since $Q_{P}(x)$ is invertible this implies that 
$U=0$ on $\Omega(P)$
\end{itemize}
Therefor $U+V\in \H_{(M,P)}(\Omega), $ so we have that $\H_{(M,P)}(\Omega)$ subspace of $H(\Omega).$ 
So that 
$$\H_{(M,P)}(\Omega):=\{U\in H(\Omega):U=0~a.e.;~{\rm in}~\Omega(M)~{\rm and}~\Gamma U=0~a.e.;{\rm on}~~ \partial\Omega(P)\}$$
\begin{remark}.\\
\begin{enumerate}
 \item If $M(x)\equiv 0$ in $\Omega$ and $x\in\partial\Omega(P)$, then the subspace $\H_{(M,P)}(\Omega)=H_{0}(\Omega)$
\item If $P(x)\equiv 0$ in $\partial\Omega$ and $x\in\Omega(M)$, then the subspace $\H_{(M,P)}(\Omega)=\{0\}$
\end{enumerate}
\end{remark}
Thus, one can split the Hilbert space $H(\Omega)$ as a driect $(A,\Sigma)-$orthogonal sum in the following way 
$$H(\Omega)=\H_{(M,P)}(\Omega)\oplus_{(A,\Sigma)}[\H_{(M,P)}(\Omega)]^{\perp}$$
\begin{remark}
 If $(\mu,\varphi)\in\mathbb{R}\times H(\Omega)$ is an eigenpair of \ref{Es4}, then it follows from the definition of $H_{0}(\Omega)$
that 
$$<\varphi,V>_{(A,\Sigma)}=
\int_{\Omega}[\triangledown\varphi.\triangledown V+\<A(x)\varphi,V\>]\,dx
+\int_{\partial\Omega}\<\Sigma(x)\varphi,V\>\,dx=0,~~~$$
$\forall ~V\in \H_{(M,P)}(\Omega)$ that is, $\varphi\in [\H_{(M,P)}(\Omega)]^{\bot} $
\end{remark}
\begin{itemize}
  \item We shall make use in what follows the real Lebesgue space $L^{q}_{k}(\partial\Omega)$ for $1\leq q\leq\infty$, and 
of the continuity and compactness of the trace operator. 
$$\Gamma :H(\Omega)\to L^{q}_{k}(\partial\Omega)~~for~~1\leq q<\frac{2(n-1)}{n-2}$$
 is well-defined it is a Lebesgue integrable function  with respect to Hausdorff $N-1$ dimensional measure, 
 sometime we will just use $U$ in place of $\Gamma U$ when considering the trace of function on $\partial\Omega$
Throughout  this work we denote the $L^{2}_{N}(\partial\Omega)-$ inner product by 
$$\<U,V\>_{\partial}:=\int_{\partial\Omega}U.V\,dx$$ and  the associated norm by 
$$||U||^{2}_{\partial}:=\int_{\partial\Omega}U.U~\forall U,V\in H(\Omega)$$
(see \cite{KJF77}, \cite{Nec67} and the references therein for more details )
  \item The trace mapping $\Gamma:H(\Omega)\to L^{2}_{N}(\partial\Omega)$ is compact (see \cite{Gri})
  \item 
  \begin{equation}\label{IMP}
\begin{gathered}
\<U,V\>_{(M,P)}=\int_{\Omega}\<M(x)U,V\>\,dx+\int_{\partial\Omega}\<P(x) U,V\>\,dx
\end{gathered}
\end{equation}
    defines an inner product for $H(\Omega)$, with associated norm 
  \begin{equation}\label{NMP}
\begin{gathered}
  ||U||_{(M,P)}^{2}:=\int_{\Omega}\<M(x)U,U\>\,dx+\int_{\partial\Omega}\<P(x) U,U\>\,dx
\end{gathered}
\end{equation}
\end{itemize}

 \begin{theorem}\label{ths}
Assume that $A2,S2,C1-C3$ as above. Then we have the following. 
\begin{description}
 \item[i] The eigensystem \eqref{Es3} has a sequence of real eigenvalues 
$$0<\mu_{1}\leq\mu_2\leq\mu_3\leq\cdots\leq\mu_j\leq\cdots\to\infty~as~j\to\infty$$ 
each eigenvalue has a finite-dimensional eigenspace. 
\item[ii] The eigenfunctions $\varphi_j$ corresponding to the eigenvalues $\mu_j$ from an $(A,\Sigma)-$orthogonal and $(M,P)-$orthonormal family in 
$[\mathbb{H}_{M,P}(\Omega)]^{\bot}$ ( a closed subspace of $H(\Omega)$
\item[iii] The normalized eigenfunctions provide a complete $(A,\Sigma)-$orthonormal basis of $[\mathbb{H}_{M,P}(\Omega)]^{\bot}$. 
Moreover, each function in $U\in [\mathbb{H}_{M,P}(\Omega)]^{\bot}$
has a unique representation of the from 
\begin{equation}\label{4}
\begin{gathered}
U=\displaystyle\sum^{\infty}_{j=1}c_j\varphi_j~with~c_j:=\frac{1}{\mu_j}\<U,\varphi_j\>_{(A,\Sigma)}=<U,\varphi_j>_{(M.P)} \\
||U||^{2}_{(A,\Sigma)}=\displaystyle\sum^{\infty}_{j=1}\mu_j|c_j|^2
\end{gathered}
\end{equation}
In addition, $$||U||^{2}_{(M,P)}=\displaystyle\sum^{\infty}_{j=1}|c_j|^2$$
\end{description}
\end{theorem}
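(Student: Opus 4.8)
The plan is to reformulate the eigensystem \eqref{Es3} as an eigenvalue problem for a compact self-adjoint operator on the Hilbert space $[\mathbb{H}_{(M,P)}(\Omega)]^{\perp}$, equipped with the $(A,\Sigma)$-inner product, and then invoke the spectral theorem. First I would observe that under Assumption \ref{ASMP}, the bilinear form
$$
\langle U,V\rangle_{(A,\Sigma)}=\int_{\Omega}\bigl(\nabla U\cdot\nabla V+\langle A(x)U,V\rangle\bigr)\,dx+\int_{\partial\Omega}\langle\Sigma(x)U,V\rangle\,dx
$$
is an inner product on $H(\Omega)$ whose norm is equivalent to the standard one on the complement $[\mathbb{H}_{(M,P)}(\Omega)]^{\perp}$; this uses a Poincaré-type inequality, whose proof is the usual compactness-contradiction argument exploiting that $A$ (or $\Sigma$) is positive definite on a set of positive measure. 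Next, using $(M1)$, $(P1)$, the growth exponents in the definitions of $L^{p}$, and the continuity/compactness of the embeddings $H^{1}(\Omega)\hookrightarrow L^{p}(\Omega)$ and of the trace $\Gamma\colon H(\Omega)\to L^{2}_{N}(\partial\Omega)$, I would check that the form $\langle U,V\rangle_{(M,P)}$ in \eqref{IMP} is bounded on $H(\Omega)$, so by Riesz representation there is a bounded linear operator $T$ on $[\mathbb{H}_{(M,P)}(\Omega)]^{\perp}$ with $\langle TU,V\rangle_{(A,\Sigma)}=\langle U,V\rangle_{(M,P)}$ for all $U,V$. The operator $T$ is self-adjoint and positive (it is injective on the complement because $\|U\|_{(M,P)}=0$ forces $U\in\mathbb{H}_{(M,P)}(\Omega)$), and an eigenpair $(\mu,\varphi)$ of \eqref{Es4} corresponds exactly to $T\varphi=\mu^{-1}\varphi$.

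The crux is compactness of $T$. I would argue that if $U_n\rightharpoonup U$ weakly in $H(\Omega)$, then by the compact Sobolev embedding $U_n\to U$ strongly in $L^{q}_{k}(\Omega)$ for the relevant subcritical $q$, and by compactness of the trace $U_n\to U$ strongly in $L^{2}_{N}(\partial\Omega)$; combining these with the $L^p$/$L^q$ integrability of the entries $m_{ij},\rho_{ij}$ and Hölder's inequality shows $\|U_n-U\|_{(M,P)}\to 0$, hence $\langle T(U_n-U),V\rangle_{(A,\Sigma)}\to 0$ uniformly for $\|V\|_{(A,\Sigma)}\le 1$, i.e. $TU_n\to TU$ strongly. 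This is the step I expect to require the most care, since it is exactly where the hypotheses on the exponents $p,q$ (the $\tfrac{N}{2}$ and $N-1$ thresholds) must be used to keep the weighted integrals continuous on $H^{1}$.

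With $T$ compact, self-adjoint and positive on the Hilbert space $[\mathbb{H}_{(M,P)}(\Omega)]^{\perp}$, the spectral theorem yields a non-increasing sequence of positive eigenvalues $\lambda_1\ge\lambda_2\ge\cdots\to 0$ with finite-dimensional eigenspaces and an orthonormal (in $\langle\cdot,\cdot\rangle_{(A,\Sigma)}$) basis of eigenfunctions $\{\psi_j\}$; setting $\mu_j=1/\lambda_j$ gives part (i), and the ordering $0<\mu_1\le\mu_2\le\cdots\to\infty$ follows. The positivity $\mu_j>0$ is already recorded in the remark after the definition of \eqref{Es4}. For (ii), the defining relation $\langle T\psi_j,V\rangle_{(A,\Sigma)}=\langle\psi_j,V\rangle_{(M,P)}$ shows that $(A,\Sigma)$-orthonormality of the $\psi_j$ combined with $T\psi_j=\mu_j^{-1}\psi_j$ gives $\langle\psi_j,\psi_\ell\rangle_{(M,P)}=\mu_j^{-1}\langle\psi_j,\psi_\ell\rangle_{(A,\Sigma)}=\mu_j^{-1}\delta_{j\ell}$; rescaling $\varphi_j:=\mu_j^{1/2}\psi_j$ (equivalently $\varphi_j:=\psi_j/\|\psi_j\|_{(M,P)}$) produces a family that is simultaneously $(A,\Sigma)$-orthogonal and $(M,P)$-orthonormal, and all eigenfunctions lie in $[\mathbb{H}_{(M,P)}(\Omega)]^{\perp}$ by the remark preceding the theorem. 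For (iii), completeness of $\{\psi_j\}$ as an $(A,\Sigma)$-orthonormal basis is part of the spectral theorem (the orthogonal complement of $\overline{\mathrm{span}}\{\psi_j\}$ would be a subspace on which $T$ vanishes, i.e. contained in $\mathbb{H}_{(M,P)}(\Omega)$, hence trivial in the complement); then for $U\in[\mathbb{H}_{(M,P)}(\Omega)]^{\perp}$ one expands $U=\sum_j\langle U,\varphi_j\rangle_{(M,P)}\varphi_j$, and the identity $\langle U,\varphi_j\rangle_{(A,\Sigma)}=\mu_j\langle\varphi_j,\varphi_j\rangle_{(M,P)}\langle U,\varphi_j\rangle_{(M,P)}=\mu_j\langle U,\varphi_j\rangle_{(M,P)}$ (using \eqref{Es4} with test function $U$, and then $(M,P)$-orthonormality) gives $c_j=\tfrac1{\mu_j}\langle U,\varphi_j\rangle_{(A,\Sigma)}=\langle U,\varphi_j\rangle_{(M,P)}$; Parseval in each inner product yields $\|U\|_{(A,\Sigma)}^2=\sum_j\mu_j|c_j|^2$ and $\|U\|_{(M,P)}^2=\sum_j|c_j|^2$, completing the proof.
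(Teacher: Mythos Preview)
Your argument is correct and complete: reducing \eqref{Es4} to $T\varphi=\mu^{-1}\varphi$ for the Riesz operator $T$ defined by $\langle TU,V\rangle_{(A,\Sigma)}=\langle U,V\rangle_{(M,P)}$, then establishing compactness of $T$ via the compact Sobolev embedding and the compact trace (this is precisely where the exponent thresholds $p>\tfrac{N}{2}$ and $q\ge N-1$ enter, as you say), and finally invoking the spectral theorem for compact self-adjoint operators, is a clean and standard route that yields (i)--(iii) simultaneously.

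The paper itself does not supply a proof of this theorem in the main text; it is stated and then used. The fragmentary material left after \verb|\end{document}| indicates that the authors had in mind the dual variational approach: set $\alpha_{1}=\sup\{\|U\|_{(M,P)}:\|U\|_{(A,\Sigma)}\le 1\}$, show the supremum is attained (weak compactness of the unit ball plus weak continuity of $\beta(U)=\|U\|_{(M,P)}$), identify the maximizer as the first eigenfunction with $\mu_{1}=\alpha_{1}^{-1}$, and then iterate on successive $(A,\Sigma)$-orthogonal complements to build the higher eigenpairs. That Courant--Fischer style construction is equivalent to yours---the weak continuity of $\beta$ is exactly your compactness of $T$---but it produces the eigenvalues one at a time and requires a separate completeness argument, whereas your operator-theoretic packaging delivers the full orthonormal basis and the Parseval identities in one stroke from the spectral theorem. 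Either approach is fine here; yours is tidier for parts (ii) and (iii).
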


The following proposition shows the principality of the first eigenvalue $\mu_{1}.$
\begin{proposition}\label{Ep1}
The first eigenvalue $\mu_{1}$ is simple if and only if the associated eigenfunction $\varphi_{1}$ does not changes sign (i.e.; 
$\varphi_{1}$ is strictly positive or strictly negative in $\Omega$ .
\end{proposition}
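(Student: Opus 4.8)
The plan is to combine the variational description of $\mu_1$ supplied by Theorem~\ref{ths} with a lattice (positive/negative part) argument and a strong maximum principle for cooperative systems. Denote by $E_1$ the $\mu_1$-eigenspace and by $\mathcal{R}(U)=\|U\|_{(A,\Sigma)}^2/\|U\|_{(M,P)}^2$ the generalized Rayleigh quotient. From Theorem~\ref{ths}(ii)--(iii) one first extracts the global inequality $\|U\|_{(A,\Sigma)}^2\ge\mu_1\|U\|_{(M,P)}^2$ for every $U\in H(\Omega)$, with equality precisely when $U\in E_1$; in particular $\mu_1=\min\mathcal{R}$ and the minimizers are exactly the first eigenfunctions.

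The key step is the structural claim: if $\varphi=(\varphi^1,\dots,\varphi^k)$ is a first eigenfunction, then its componentwise modulus $|\varphi|=(|\varphi^1|,\dots,|\varphi^k|)$ is again a first eigenfunction, and so are the parts $\varphi^{+}$ and $\varphi^{-}$ whenever they are not identically zero. The tool is to test the weak identity~\eqref{Es4} with $U=\varphi^{+}$ and $U=\varphi^{-}$: since $|\nabla|\varphi^i||=|\nabla\varphi^i|$ a.e., all entries $a_{ij},\sigma_{ij},m_{ij},\rho_{ij}$ are nonnegative by (A1),(S1),(M1),(P1), and the weight matrices are symmetric, one manipulates the resulting two identities and compares with the global Rayleigh bound to force $\mathcal{R}(\varphi^{+})=\mathcal{R}(\varphi^{-})=\mathcal{R}(|\varphi|)=\mu_1$, whence $\varphi^{\pm},|\varphi|\in E_1$. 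I would then bootstrap regularity: the integrability of the coefficients in (A2),(S2),(M1),(P1) together with the $C^{0,1}$ regularity of $\partial\Omega$ puts every first eigenfunction in $C(\bar\Omega)$, and then the strong maximum principle for the cooperative operator $-\Delta+A(x)$ under the Robin/Steklov-type boundary condition shows that a nonzero, componentwise nonnegative member of $E_1$ is strictly positive throughout $\Omega$.

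Granting these two ingredients, both implications are short. If $\mu_1$ is simple, apply the modulus fact to a normalized first eigenfunction $\varphi_1$: then $|\varphi_1|\in E_1=\operatorname{span}\{\varphi_1\}$, so $|\varphi_1|=c\,\varphi_1$; taking moduli gives $|c|=1$, hence $\varphi_1=|\varphi_1|$ or $\varphi_1=-|\varphi_1|$, i.e.\ $\varphi_1$ does not change sign, and by the maximum principle it is strictly positive or strictly negative in $\Omega$. Conversely, assume $\varphi_1$ does not change sign, say $\varphi_1>0$ in $\Omega$, and suppose for contradiction that $\dim E_1\ge 2$. Choose $\psi\in E_1$ that is $(M,P)$-orthogonal to $\varphi_1$ (Theorem~\ref{ths}(ii)); then $|\psi|\in E_1$, $|\psi|\ge 0$, $|\psi|\not\equiv 0$, so $|\psi|>0$ in $\Omega$, and consequently $\langle\varphi_1,|\psi|\rangle_{(M,P)}=\int_\Omega\langle M(x)\varphi_1,|\psi|\rangle\,dx+\int_{\partial\Omega}\langle P(x)\varphi_1,|\psi|\rangle\,dx>0$, because by (MP) each nonnegative integrand $m_{ij}\varphi_1^{\,j}|\psi^i|$ (resp.\ $\rho_{ij}\varphi_1^{\,j}|\psi^i|$) is strictly positive on a set of positive measure. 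Now set $w_t:=\varphi_1-t|\psi|\in E_1$; for small $t>0$ it is strictly positive, for large $t$ it is negative somewhere, so there is a least $t^{\ast}>0$ at which $w_{t^{\ast}}\ge 0$ but strict positivity fails at some point of $\Omega$. Being a nonnegative member of $E_1$, the maximum principle forces $w_{t^{\ast}}\equiv 0$, i.e.\ $\varphi_1=t^{\ast}|\psi|$; then the genuine coupling guaranteed by the off-diagonal part of (MP) prevents $\psi$ from having a sign change in any component, so $\psi=\pm (t^{\ast})^{-1}\varphi_1$, contradicting the linear independence of $\varphi_1$ and $\psi$. Hence $\mu_1$ is simple.

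The step I expect to be the main obstacle is making the structural claim rigorous in the \emph{system} setting. In the scalar case, passing from $\varphi$ to $|\varphi|$ leaves the potential and boundary terms untouched, so $|\varphi|$ is trivially a minimizer; for systems, replacing $\varphi$ by $|\varphi|$ alters the quadratic forms $\langle A(x)\cdot,\cdot\rangle$, $\langle\Sigma(x)\cdot,\cdot\rangle$, $\langle M(x)\cdot,\cdot\rangle$, $\langle P(x)\cdot,\cdot\rangle$, and one must use cooperativity (nonnegative entries) together with symmetry and the $\varphi^{\pm}$ test functions to show that the numerator of $\mathcal{R}$ cannot increase faster than $\mu_1$ times the denominator, so that $|\varphi|$ still minimizes. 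The second delicate point is that the maximum principle invoked must be the version valid for cooperative systems with a Robin/Steklov boundary operator and with coefficients of the restricted integrability permitted here, which is why the preliminary regularity bootstrap is needed.
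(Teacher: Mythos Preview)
The paper states Proposition~\ref{Ep1} without proof; only the remark about the boundary point lemma follows it. So there is no argument in the paper to compare against, and your outline has to stand on its own.

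Your overall scheme is the standard one for scalar problems, and the two final implications are clean once the structural claim and the maximum principle are granted. The gap, however, is exactly where you locate it, and under the paper's sign conventions it is a genuine one that your sketch does not close. With every entry of $A,\Sigma,M,P$ nonnegative (conditions (A1), (S1), (M1), (P1)), replacing $\varphi$ by $|\varphi|$ leaves the gradient part of the numerator unchanged but \emph{raises} each off-diagonal term $a_{ij}\varphi_i\varphi_j$ to $a_{ij}|\varphi_i|\,|\varphi_j|$, and likewise for $\Sigma$; the denominator $\|\cdot\|_{(M,P)}^2$ increases for the same reason via $M$ and $P$. To conclude $\mathcal R(|\varphi|)\le\mu_1$ you would need the increment in the numerator to be bounded by $\mu_1$ times the increment in the denominator, and nothing in the hypotheses forces that. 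Testing the weak identity with $\varphi^{\pm}$ does not rescue the argument either: mixed terms such as $a_{ij}\varphi_i^{+}\varphi_j^{-}$ appear with no sign dictated by the eigenvalue equation. A second, related problem is the maximum principle you invoke. The operator $-\Delta+A$ with entrywise nonnegative $A$ is \emph{competitive}, not cooperative, in the standard PDE sense (cooperative would require $a_{ij}\le 0$ for $i\neq j$ in this form); the paper's ad hoc term ``cooperative-plus'' does not coincide with the hypothesis under which the strong maximum principle for systems is known, so that step also needs justification.

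If the proposition is true at this level of generality, a Krein--Rutman argument on the compact solution operator is the more likely route than the modulus/variational one; alternatively, one would need additional structural assumptions linking the off-diagonal parts of $A,\Sigma$ to those of $\mu_1 M,\mu_1 P$ to make your approach go through.
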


\begin{remark}
Note that if we have smooth data and $\partial\Omega$ in proposition \ref{Ep1}, then the eigenfunction $\varphi_{1}(x)$ on $\partial\Omega$ as well, 
by the boundary point lemma (see for example \cite{Eva99}). or Hopf's Boundary Point Lemma.
\end{remark}

\begin{definition}{\bf{Reflexive Space}}
 Let $E$ be a Banach space and let $J:E\to E^{**}$ ( Where The Bidual $E^{**}$ Orthogonality )be the canonical injection from $E$ into $E^{**}.$ The space $E$
 is said to be reflexive if $J$ is surjective, i.e; $J(E)=E^{**}.$ (See \cite{Bz})  
\end{definition}
\begin{theorem}\label{rex}
 Assume that $E$ is a reflexive Banach space and let $\{x_{n}\}_{n=1}^{\infty}\subset E$ be bounded sequences in $E.$ 
 Then there exists a subsequence $\{x_{n_{l}}\}_{l=1}^{\infty}\subset E$ that converges in the weak topology  $\sigma (E,E^{*})$ ( for proof see \cite{Bz})
\end{theorem}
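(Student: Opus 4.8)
The plan is to reduce the statement to a separable reflexive space and then extract a weakly convergent subsequence by a Cantor diagonal argument run against a countable dense subset of the dual.

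First I would replace $E$ by the closed linear span $M$ of the bounded sequence $\{x_n\}_{n=1}^{\infty}$. This $M$ is a separable Banach space, and, being a closed subspace of the reflexive space $E$, it is itself reflexive; this uses the Hahn--Banach theorem, since every functional on $M$ extends to a functional on $E$, and one then verifies that the canonical injection $J_M:M\to M^{**}$ is onto because $J_E$ is. As every $x_n$ lies in $M$ and each $f\in E^*$ restricts to an element of $M^*$, any subsequence of $\{x_n\}$ converging weakly in $M$ converges weakly in $E$ as well; so I may assume $E$ is separable and reflexive. Then $E^{**}\cong E$ is separable, and because $E^*$ has dual $E^{**}$, the lemma ``separable dual implies separable'' forces $E^*$ to be separable.

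Next I would fix a countable dense set $\{f_k:k\ge1\}\subset E^*$ and put $C:=\sup_n\|x_n\|<\infty$. Each scalar sequence $(f_k(x_n))_n$ is bounded by $C\|f_k\|$, hence has a convergent subsequence, and a diagonal extraction produces a single subsequence $(x_{n_l})_l$ along which $\lim_{l\to\infty}f_k(x_{n_l})$ exists for every $k$. An $\varepsilon/3$-type estimate --- namely, if $\|f-f_k\|<\varepsilon$ then $|f(x_{n_l})-f(x_{n_m})|\le 2C\varepsilon+|f_k(x_{n_l})-f_k(x_{n_m})|$ --- shows that $(f(x_{n_l}))_l$ is Cauchy in $\mathbb{R}$, hence convergent, for every $f\in E^*$. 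Setting $L(f):=\lim_{l\to\infty}f(x_{n_l})$ then defines $L\in E^{**}$ with $\|L\|\le C$, and by reflexivity $L=J(x)$ for some $x\in E$; unwinding the definition of $J$ gives $f(x)=\lim_{l}f(x_{n_l})$ for all $f\in E^*$, i.e.\ $x_{n_l}$ converges to $x$ in the weak topology $\sigma(E,E^*)$.

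The hard part is the reduction: the facts that a closed subspace of a reflexive space is reflexive and that a separable reflexive space has separable dual are exactly the places where Hahn--Banach and the duality--separability lemma enter. Once $E^*$ is known to be separable, the remaining diagonal/completeness argument is routine. Full details can be found in \cite{Bz}.
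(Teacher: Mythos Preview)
Your argument is correct and is exactly the standard proof given in Brezis~\cite{Bz}, which is precisely what the paper defers to. There is nothing to add.
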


   \section{Main Results}   
   
We will consider special case from equation \ref{Es1} when the $f(x,U)=0$ and  when $M(x)=0$ in equation \ref{Es3} 
then we have that

\begin{equation}\label{Es01}
\begin{gathered}
 -\Delta U + A(x)U = 0\quad\text{in } \Omega,\\
\frac{\partial U}{\partial \nu}+\Sigma(x)U=g(x,U) \quad\text{on }
\partial\Omega,
\end{gathered}
\end{equation}

\begin{equation}\label{Es02}
\begin{gathered}
 -\Delta U+ A(x)U =0\quad\text{in } \Omega,\\
\frac{\partial U}{\partial \nu}+\Sigma(x)U=\mu P(x)U \quad\text{on }
\partial\Omega,
\end{gathered}
\end{equation}
We obtain solution for the problem (\ref{Es01}) which the nonlinearities at the boundary are compared with higher eigenvalues 
of equation(\ref{Es02}), we obtain form Theorem\ref{th1} problem (\ref{Es02}) has sequence of real eigenvalues 
$$0<\mu_{1}\leq\mu_2\leq\mu_3\leq\cdots\leq\mu_j\leq\cdots\to\infty~as~j\to\infty$$ 
each eigenvalue has a finite-dimensional eigenspace. 
In this section we impose conditions on the asymptotic behavior of the 'slopes' of the boundary nonlinearities $g(x,U)$ i.e.; 
on $\frac{\<g(x,U),U\>}{|U|^{2}}$ as $|U|\to\infty.$ These conditions are of nonuniform type since the asymptotic ration 
$\frac{\<g(x,U),U\>}{|U|^{2}}$ need not be (uniformly) bounded away from consecutive Steklov-Robin eigenvalues. We mention 
that results below, the boundary nonlinearity $g(x,U)$ may be replaced by $g(x,U)+H(x)$ where $H\in [W^{1-1/p}_{P}(\partial\Omega)]^{k}
\subset [C(\partial\Omega)]^{k}.$ We approach which based in on topological degree
theory on suitable boundary-trace spaces, here in this work we generalization of \cite{MK2} from Scalar case to system case. 
If for the scalar case this is in contrast recent approaches where variational methods were used for problems with nonlinear 
conditions, but in the case of system 
the variational methods doesn't work because of the potentials doesn't exists as integrations of $f(x,U)$ and $g(x,U)$, 
since that potential function the hart 
of the variational methods therefore doesn't work. In case  $f(x,U)\neq 0$ the system will be very hard to study even in in the 
topological degree methods and comparison
with eigenvalues of Steklov-Robin that the reason we will study the case when $(Fx,U)=0$ and we will leave the case when $f(x,U)\neq 0$ 
as  the future work. 
\begin{remark}
 In the scalar when $f(x,U)\neq$ comparison with eigenvalues of Steklov-Robin used the variational method was done by Mavinga  see 
\cite{Mav12} of strong Solution for equation (\ref{Es1}) in scalar case . For Multiple solution (weak) for equation ( \ref{Es1}) in scalar case and $\Sigma(x)=0$  used  variational Methods
was done by Yao see \cite{Yao2014}.
\end{remark}
\begin{remark}
Let $U\neq 0$, and assume that $(P1)$ hold,  the eigenvalue of $P(x)$ are nonnegative Let the $\lambda_{p}(x)\geq 0$ be the eigenvalue of $P(x)$ from 
  Rayleigh quotient we have that $\min\lambda_{P}(x)\leq\frac{\<P(x)U,U\>}{\<U,U\>}$ and $\max\lambda_{P}(x)\geq\frac{\<P(x)U,U\>}{\<U,U\>}$
\end{remark}
\subsection{Main Theorem}
The main result of this chapter is given in the following existence theorem. 
\begin{theorem}\label{th0}{\bf{Nonuniform  non-resonance between consecutive Steklov-Robin eigenvalues}}
Assume that $(A1),~(A2),~(S1),~(S2)~, (G1)-(G3)$ and assumption $\ref{ASMP}$ are holds,  assume there exist constants $a,b>0$,  
and Let $\alpha(x):=a(\min\lambda_{P}(x))$ and ,$\beta(x)=b(\max\lambda_{P}(x))$  such that 
$$\mu_{j}\leq\alpha(x)\leq\liminf_{|U|\to\infty}\frac{\<g(x,U),U\>}{|U|^{2}}\leq\limsup_{|U|\to\infty}\frac{\<g(x,U),U\>}{|U|^{2}}\leq \beta(x)\leq \mu_{j+1}$$
 uniformly for a.e. $x\in \partial\Omega$ with 
 $$\int_{\partial\Omega}(\alpha(x)-\mu_{j})\<\varphi(x),\varphi(x)\>\,dx>0,~~~\forall ~~\varphi\in E_{j+1}\setminus\{0\}$$
and $$~~\int_{\partial\Omega}(\mu_{j+1}-\beta(x))\<\varphi(x),\varphi(x)\>\,dx>0,~~\forall~~ \varphi\in E_{j+1}\setminus\{0\}, 
  $$ 
  Where $i\in \mathbb{N}, $ $E_{i}$ denotes the Steklov-Robin nullspace (finite-dimensional) with the Steklov-Robin
  eigenvalue $\mu_{i}$. Then the nonlinear system (\ref{Es02}) has at least one (strong) solution $U\in [W^{2}_{p}(\Omega)]^{k}$
 \end{theorem}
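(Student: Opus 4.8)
The strategy is a Leray--Schauder degree argument carried out on the trace space, following the scalar template of \cite{MK2} but with all scalar quantities replaced by their vector/matrix analogues. First I would recast (\ref{Es02}) as a fixed-point equation. Using Theorem \ref{ths}, decompose $H(\Omega)=\mathbb{H}_{(M,P)}(\Omega)\oplus_{(A,\Sigma)}[\mathbb{H}_{(M,P)}(\Omega)]^{\perp}$ and, on the latter subspace, expand in the $(A,\Sigma)$-orthonormal basis of eigenfunctions $\{\varphi_j\}$. Since $f\equiv 0$, a weak solution of (\ref{Es02}) is exactly a $U\in H(\Omega)$ with
$$\<U,V\>_{(A,\Sigma)}=\int_{\partial\Omega}\<g(x,U),V\>\,dx\qquad\forall V\in H(\Omega).$$
Let $N:L^2_k(\partial\Omega)\to L^2_k(\partial\Omega)$ be the Nemytskii operator induced by $g$ (well-defined and continuous by $(G1)$--$(G3)$, bounded by $(G2)$), let $\Gamma$ be the compact trace operator, and let $\Gamma^{*}$ be its adjoint into $H(\Omega)$ relative to $\<\cdot,\cdot\>_{(A,\Sigma)}$; then the problem is $U=\Gamma^{*}N\Gamma U =: \mathcal{T}(U)$, where $\mathcal{T}$ is completely continuous because $\Gamma$ is compact. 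By elliptic regularity (bootstrapping from the $L^p$-integrability in $(A2)$, $(S2)$, $(P1)$ and the Lipschitz bound $(G3)$ on $g$), any such weak solution lies in $[W^2_p(\Omega)]^k$, which gives the claimed regularity once existence is established.

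\textbf{The homotopy and a priori bounds.} I would introduce the homotopy $\mathcal{T}_t(U)=\Gamma^{*}\big[(1-t)\tfrac{1}{2}(\alpha+\beta)\Gamma U + t\,N\Gamma U\big]$ for $t\in[0,1]$ — more precisely, linearly deforming the nonlinearity $g(x,U)$ to the ``averaged'' linear map $x\mapsto \tfrac{1}{2}(\alpha(x)+\beta(x))\,U$, whose eigenvalue lies strictly between $\mu_j$ and $\mu_{j+1}$. The crucial step is the \emph{a priori} estimate: there is $R>0$ such that $U=\mathcal{T}_t(U)$ with $t\in[0,1]$ forces $\|U\|_{(A,\Sigma)}<R$. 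Suppose not; then a sequence $U_n$ with $\|U_n\|\to\infty$, $U_n=\mathcal{T}_{t_n}(U_n)$. Normalizing $W_n=U_n/\|U_n\|$, the compactness of $\mathcal{T}$ gives (along a subsequence) $W_n\to W$ strongly with $\|W\|=1$, $t_n\to t$, and — using the sandwich hypothesis on $\liminf/\limsup$ of $\<g(x,U),U\>/|U|^2$ together with $(G2)$ to control the quotient $g(x,U_n)/\|U_n\|$ and pass to the limit — $W$ satisfies a limiting linear inequality of the form
$$\<W,V\>_{(A,\Sigma)}=\int_{\partial\Omega}\<\Theta(x)W,V\>\,dx\qquad\forall V,$$
with $\alpha(x)\le \Theta(x)\le\beta(x)$ in the quadratic-form sense (here $\Theta$ encodes the weak-$*$ limit of the slope $g(x,U_n)/|U_n|$ contracted against $W$). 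Testing with $V=W$ and using the variational characterization of the eigenvalues together with the two strict sign conditions $\int_{\partial\Omega}(\alpha-\mu_j)\<\varphi,\varphi\>>0$ and $\int_{\partial\Omega}(\mu_{j+1}-\beta)\<\varphi,\varphi\>>0$ on $E_{j+1}$ forces $W=0$ (the standard non-resonance argument: $W$ would have to be a $\mu_j$-eigenfunction \emph{and} a $\mu_{j+1}$-eigenfunction simultaneously, hence lie in the trivial intersection), contradicting $\|W\|=1$.

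\textbf{Degree computation and conclusion.} With the uniform bound $R$ in hand, $\deg(I-\mathcal{T}_t,B_R(0),0)$ is well-defined and constant in $t$; at $t=0$ it equals $\deg(I-\Gamma^{*}\big(\tfrac{1}{2}(\alpha+\beta)\Gamma\big),B_R,0)$. This last is a linear operator whose ``eigenvalue'' $\tfrac{1}{2}(\alpha+\beta)$ sits strictly between $\mu_j$ and $\mu_{j+1}$, so $I-\mathcal{T}_0$ is invertible and its degree is $(-1)^{\beta_j}$ where $\beta_j=\dim(E_1\oplus\cdots\oplus E_j)$ is the sum of multiplicities of the eigenvalues below $\tfrac12(\alpha+\beta)$; in particular it is nonzero. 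Hence $\deg(I-\mathcal{T},B_R,0)\neq 0$, so $\mathcal{T}$ has a fixed point $U\in B_R$, which is a weak solution of (\ref{Es02}); elliptic regularity upgrades it to $[W^2_p(\Omega)]^k$. The main obstacle I anticipate is the a priori bound: handling the \emph{nonuniform} (merely $\liminf$/$\limsup$, $x$-dependent) comparison requires care in extracting the right limiting form $\Theta(x)$ — one must justify the weak-$*$ convergence of the slopes on $\partial\Omega$ using the compact trace embedding and the growth bound $(G2)$, and then argue that the ordered pair of strict integral inequalities on the single space $E_{j+1}$ is genuinely enough to kill $W$ (this is where the precise statement, which writes $E_{j+1}$ in both conditions, must be read carefully — the standard argument uses $E_j$ in the first and $E_{j+1}$ in the second, and I would verify which version is actually needed).
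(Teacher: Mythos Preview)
Your strategy is the paper's: a Leray--Schauder degree argument with a homotopy to a linear boundary problem sitting between $\mu_j$ and $\mu_{j+1}$, and an a priori bound obtained by normalizing a putative unbounded sequence of solutions and showing the limit must vanish via the strict integral hypotheses. Two implementation points differ. First, the paper fixes a \emph{constant} $\delta\in(\mu_j,\mu_{j+1})$ and sets the problem up on $[W^{1-1/p}_p(\partial\Omega)]^k$ as $\mathbb{L}U=\lambda\,(\text{Nemytskii})U$ with $\mathbb{L}U=\frac{\partial U}{\partial\nu}+\Sigma(x)U-\delta P(x)U$ restricted to $\{-\Delta U+A(x)U=0\}$; since $\delta$ is in the Steklov--Robin resolvent, $\mathbb{L}$ is invertible and at $\lambda=0$ the fixed-point map is the zero map, so the degree is simply $1$ --- no spectral index formula is needed. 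Your endpoint $\tfrac12(\alpha(x)+\beta(x))U$ also gives an invertible linear problem (your own a priori argument, applied to it, shows this), but the claimed value $(-1)^{\beta_j}$ would require a further homotopy to a constant multiple of $P(x)$ to justify; the paper's choice of $\delta P(x)$ sidesteps this entirely. Second, in the a priori step the limit $W$ is not shown to lie in $E_j\cap E_{j+1}$; the paper isolates a separate lemma proving (by testing with $U_1-U_2$ in the splitting $\bigoplus_{l\le j}E_l\oplus\bigoplus_{l\ge j+1}E_l$ and using the variational inequalities $\|U_1\|_{(A,\Sigma)}^2\le\mu_j\int_{\partial\Omega}\langle PU_1,U_1\rangle$ and $\|U_2\|_{(A,\Sigma)}^2\ge\mu_{j+1}\int_{\partial\Omega}\langle PU_2,U_2\rangle$) that $W\in E_j$ \emph{or} $W\in E_{j+1}$, and the two strict integral conditions then eliminate these alternatives one at a time --- so, as you suspected, the first is really used on $E_j$ and the second on $E_{j+1}$.
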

 In contrast to some recent approaches in the literature for system which nonlinear boundary conditions , we first the system 
 in terms of nonlinear compact perturbations of the identity on appropriate boundary-trace as follows
\section{THE HOMOTOPY}
Let $\delta$ between and consecutive Steklov-Robin eigenvalues ($i.e.; \mu_{j}<\delta<\mu_{j+1})$ and $\delta$ isn't eigenvalue of Steklov-Robin system.
system \ref{Es01}  equivalently 
\begin{equation}\label{e01}
\begin{gathered}
 -\Delta U + A(x)U = 0\quad\text{in } \Omega,\\
\frac{\partial U}{\partial \nu}+\Sigma(x)U-\delta U=g(x,U)-\delta U \quad\text{on }
\partial\Omega,
\end{gathered}
\end{equation}

Let us associate to equation (\ref{e01}) the homotopy, with $\lambda\in[0,1]$ 

\begin{equation}\label{hs01}
\begin{gathered}
 -\Delta U + A(x)U = 0\quad\text{in } \Omega,\\
\frac{\partial U}{\partial \nu}+\Sigma(x)U-\delta P(x) U=\lambda[g(x,U)-\delta P(x)U] \quad\text{on }
\partial\Omega,
\end{gathered}
\end{equation}
or equivalent, 
\begin{equation}\label{hs02}
\begin{gathered}
 -\Delta U + A(x)U = 0\quad\text{in } \Omega,\\
\frac{\partial U}{\partial \nu}+\Sigma(x)U=(1- \lambda) \delta P(x)U+\lambda g(x,U) \quad\text{on }
\partial\Omega,
\end{gathered}
\end{equation}
Note that for $\lambda=0$ we have that we have a linear system which admits only the trivial solutions since 
$\delta$ is in the resolvent of the linear Steklov-Robin problem ( for scalar case see \cite{Mav12} and \cite{Auc12} and the references therein ). 
Whereas  for $\lambda=1$ we have system (\ref{Es01})
We define the linear (Steklov-Robin) boundary operator 
$$\mathbb{L}:Dom(\mathbb{L})\subset [W^{2}_{p}(\Omega)]^{k}\Subset [W^{1-1/p}_{p}(\partial\Omega)]^{k}\to [W^{1-1/p}_{p}(\partial\Omega)]^{k}$$ ('$\Subset$' 
compact embedding. Here, the compact `containment' $ [W^{2}_{p}(\Omega)]^{k}\Subset [W^{1-1/p}_{p}(\partial\Omega)]^{k}$ must be 
understood in the sense of the trace (i.e.; $[W^{2}_{p}(\Omega)]^{k}\hookrightarrow[W^{1-1/p}_{p}(\partial\Omega)]^{k}$ is a 
compact linear operator ( see \cite{Eva99} and  \cite{GT83} ) )  
by 
\begin{equation}\label{l}
\begin{gathered}
\mathbb{L}U:=\frac{\partial U}{\partial \nu}+\Sigma(x)U-\delta P(x) U
\end{gathered}
\end{equation}

where 
$$Dom(\mathbb{L}):=\{U\in [W_{p}^{2}]^{k}:-\Delta U + A(x)U = 0 ~a.e. ~{\rm in}~ \Omega\}$$
\subsection{Nemytsk\v{i}i Operator}
 $$\v{N}:[W^{1-1/p}_{p}(\partial\Omega)]^{k}\subset[C(\partial\Omega)]^{k}\to [W^{1-1/p}_{p}(\partial\Omega)]^{k}$$ by 
 \begin{equation}\label{n}
\begin{gathered}
  \v{N}U:=G(.,U)-\delta P(.)U
\end{gathered}
\end{equation}

 System \ref{Es01} is then equivalent to finding $U\in Dom(\mathbb{L})$ such that

 \begin{equation}\label{nl}
\begin{gathered}
 \mathbb{L}U=\v{N}U
\end{gathered}
\end{equation}
Whereas the homotopy system\ref{hs02} is equivalent to 

\begin{equation}\label{nlh}
\begin{gathered}
 \mathbb{L}U=\lambda\v{N}U \,\,\,\,\, U\in Dom(\mathbb{L})
\end{gathered}
\end{equation}
 \section{Properties of Operators $\mathbb{L}, ~$\v{N}$~,\mathbb{L}^{-1}$ }
\begin{description}
 \item[I] $Dom(\mathbb{L}):=\{U\in [W_{p}^{2}]^{k}:-\Delta U + A(x)U = 0 ~a.e. ~{\rm in}~ \Omega\}$ is a closed linear subspace of 
 $[W_{p}^{2}]^{k}$
 \begin{proof}
  Clearly $Dom(\mathbb{L})\subset [W_{p}^{2}]^{k}$ is not empty (if $U=0$ then $0\in Dom(\mathbb{L})$) , let $U,W\in Dom(\mathbb{L})$ then we have that 
  $-\Delta (U+W) + A(x)(U+W) =-\Delta U + A(x)U -\Delta W + A(x)W =0,$ therefore  $U+W\in Dom(\mathbb{L}),$ 
  now let $\alpha\in\mathbb{R}$ such that 
  $-\Delta \alpha U + \alpha A(x)U=\alpha(-\Delta U + A(x)U)=0$ so that $\alpha U \in Dom(\mathbb{L}).$ Therefore 
  $Dom(\mathbb{L})$ is linear subspace of $[W_{p}^{2}]^{k}.$
  For the closed let $\{U_{n}\}_{n=1}^{\infty}\in Dom(\mathbb{L}),$ and $U_{n}\to U $ in $[W_{p}^{2}]^{k}.$ then 
  $-\Delta U_{n}\to-\Delta U$ and $A(x)U_{n}\to A(x)U$ as $n\to\infty$ then $-\Delta U+A(x)U=0$ so that $U\in Dom(\mathbb{L})$ 
  Therefore $Dom(\mathbb{L})$ is closed linear subspace of $[W_{p}^{2}]^{k}.$  
   \end{proof}
   \item[II] $\ker{\mathbb{L}}=0$, onto, and continuous  
   \begin{proof} Clearly is continuous for the definition 
    Let $U\in\ker{\mathbb{L}}\subset Dom(\mathbb{L})$ this implies that $-\Delta U + A(x)U)=0$ and by the definition of $\mathbb{L}$ (\ref{l}) 
    we have that 
   \begin{equation}\label{l0}
\begin{gathered}
\mathbb{L}U=0=\frac{\partial U}{\partial \nu}+\Sigma(x)U-\delta P(x) U \, x\in \partial\Omega
\end{gathered}
\end{equation}
 Then you have that 
\begin{equation}\label{l00}
\begin{gathered}
-\Delta U + A(x)U = 0 ~a.e. ~{\rm in}~ \Omega \\
\frac{\partial U}{\partial \nu}+\Sigma(x)U=\delta P(x) U \,x\in \partial\Omega
\end{gathered}
\end{equation}
system \ref{l00} has trivial solution. Therefore the $\ker{\mathbb{L}}=0$ i.e.; $\mathbb{L}$ one-to-one, Since the domain of $\mathbb{L}$ 
closed linear and  $\mathbb{L}:Dom(\mathbb{L})\to[W^{1-1/p}_{p}(\partial\Omega)]^{k} $ is continuous, clearly $\mathbb{L}$ is onto.  
 \end{proof}
 \item[III] $\mathbb{L}$ It is  a Fredholm operator of index zero 
 \begin{proof} Since
 \item[1]  $\ker{\mathbb{L}}=0$ has a finite dimension; $dim\ker{\mathbb{L}}=1$
 
 \item[2] $Im {\mathbb{L}} $ closed and has a finite co-dimension (i.e.; $R({\mathbb{L}})=[W^{1-1/p}_{p}(\partial\Omega)]^{k}$ ) 
 $dim ([W^{1-1/p}_{p}(\partial\Omega)]^{k}/R({\mathbb{L}}))=1$
  Indeed it is Fredholm operator of index zero  by the definition of Fredholm operator  of index zero.
   \end{proof}
   \item[IV] The inverse of $\mathbb{K}=\mathbb{L}^{-1}$ exists
   \begin{proof}
    From \item[II] the inverse $\mathbb{K}$ exists
   \end{proof}
\item[V] $\mathbb{K}$ is a compact
\begin{proof}
 Owing to the Compactness of the trace operator $Dom({\mathbb{L}})\hookrightarrow [W^{1-1/p}_{p}(\partial\Omega)]^{k},$ we
deduce that 
$$\mathbb{K}=\mathbb{L}^{-1}:[W^{1-1/p}_{p}(\partial\Omega)]^{k}\to^{\rm~ continuous~} Dom({\mathbb{L}})\hookrightarrow^{\rm compact}[W^{1-1/p}_{p}(\partial\Omega)]^{k}$$
 Since we that left compositions  and $T\circ S$ of compact $T$ with continuous $S$ produce compact operators for the proof see \cite{GT83}.
 Therefor $\mathbb{K}$ is a compact linear operator from $[W^{1-1/p}_{p}(\partial\Omega)]^{k}$ into $[W^{1-1/p}_{p}(\partial\Omega)]^{k}.$
\end{proof}

 \item[VI] $\v{N}$ is continuous operator 
 \begin{proof}
  Since the function $G$ satisfied $(G3)$ and $[W^{1-1/p}_{p}(\partial\Omega)]^{k}\subset [C(\partial\Omega)]^{k}$
  Through the surjectivity  of the trace operator $[W^{1-1/p}_{p}(\partial\Omega)]^{k}\to [W^{1-1/p}_{p}(\partial\Omega)]^{k}$ and the embedding 
  $[W^{1-1/p}_{p}(\partial\Omega)]^{k}\Subset [C(\partial\Omega)]^{k},$ for $p>N$ it follows that nonlinear operator $\v{N}$ is continuous operator.
 \end{proof}
\item[VII] The operator $\mathbb{K}\v{N}$ is continuous and compact (i.e.; completely continuous)
\begin{proof}
 $\mathbb{K}\v{N}:[W^{1-1/p}_{p}(\partial\Omega)]^{k}\to[W^{1-1/p}_{p}(\partial\Omega)]^{k}$ Since $\mathbb{K}$ is compact from $V:$,
 and $\v{N}$ is continuous from  $VI:$, then the compositions of $\mathbb{K}\v{N}$ is completely continuous (see \cite{Eva99}).
\end{proof}
   \end{description}
 Now we will discuss system (\ref{nlh}). Thus system \ref{nlh}  is equivalent to 
 \begin{equation}\label{nlh0}
\begin{gathered}
U=\lambda\mathbb{L}^{-1}\v{N}U \,{\rm with}\,\lambda\in[0,1]\,\,\, U\in [W^{1-1/p}_{p}(\partial\Omega)]^{k} \,{\rm or}\\
U=\lambda\mathbb{K}\v{N}U \,{\rm with}\,\lambda\in[0,1]\,\,\, U\in [W^{1-1/p}_{p}(\partial\Omega)]^{k}   \, {\rm or}\\
(I-\lambda\mathbb{K}\v{N})U=0 \,{\rm with}\,\lambda\in[0,1]\,\,\, U\in [W^{1-1/p}_{p}(\partial\Omega)]^{k} 
\end{gathered}
\end{equation}
Which shows that,  for  each $\lambda\in[0,1]$, the operator $\lambda\mathbb{K}\v{N}$ is a nonlinear compact perturbations of the 
identity on $[W^{1-1/p}_{p}(\partial\Omega)]^{k}$. It suffices to show that $\mathbb{K}\v{N}$ has fixed point $U\in[W^{1-1/p}_{p}(\partial\Omega)]^{k}. $
By  properties of $\mathbb{K}$, It follows that a fixed point $U\in Dom(\mathbb{L})$. Hence, $U\in[W^{2}_{p}(\Omega)]^{k},$
and is a (strong) solution of the nonlinear system (\ref{Es01}). We show that all possible solutions to the homotopy (\ref{hs01}) (equivalently to (\ref{hs02}) and (\ref{nlh})) are uniformly bounded 
in $[W^{1-1/p}_{p}(\partial\Omega)]^{k}$ independent of $\lambda\in[0,1]$ ( actually we show that they are bounded in $[W^{2}_{p}(\Omega)]^{k},$)
and then use the topological degree theory to show the existence of strong solution. We first prove following lemma 
which provides intermediate a priori estimates. 
\section{The A Priori Estimates }
\begin{lemma}\label{Lem0}
 Assume that $(A1),~(A2),~(S1),~(S2)~, (G1)-(G3)$ and assumption $\ref{ASMP}$ are holds,  assume there exist constants $a,b>0$,  
and Let $\alpha(x):=a(\min\lambda_{P}(x))$ and ,$\beta(x)=b(\max\lambda_{P}(x))$  such that 
$$\mu_{j}\leq\alpha(x)\leq\liminf_{|U|\to\infty}\frac{\<g(x,U),U\>}{|U|^{2}}\leq\limsup_{|U|\to\infty}\frac{\<g(x,U),U\>}{|U|^{2}}\leq \beta(x)\leq \mu_{j+1}$$
 uniformly for a.e. $x\in \partial\Omega$ with 
 $$\int_{\partial\Omega}(\alpha(x)-\mu_{j})\<\varphi(x),\varphi(x)\>\,dx>0,~~~\forall ~~\varphi\in E_{j+1}\setminus\{0\}$$
and $$~~\int_{\partial\Omega}(\mu_{j+1}-\beta(x))\<\varphi(x),\varphi(x)\>\,dx>0,~~\forall~~ \varphi\in E_{j+1}\setminus\{0\}, 
  $$ 
  Where $i\in \mathbb{N}, $ $E_{i}$ denotes the Steklov-Robin nullspace (finite-dimensional) with the Steklov-Robin
  eigenvalue $\mu_{i}$. Then all possible solutions to 
  \begin{equation}\label{hs0l}
\begin{gathered}
 -\Delta U + A(x)U = 0\quad\text{in } \Omega,\\
\frac{\partial U}{\partial \nu}+\Sigma(x)U=(1- \lambda) \delta P(x)U+\lambda g(x,U) \quad\text{on }
\partial\Omega,
\end{gathered}
\end{equation}
are (uniformly) bounded in $H(\Omega)$ independently of $\lambda\in[0,1]$
\end{lemma}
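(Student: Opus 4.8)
The plan is to argue by contradiction. Suppose the conclusion fails; then there are sequences $\lambda_n\in[0,1]$ and solutions $U_n$ of \eqref{hs0l} (with $\lambda=\lambda_n$) such that $t_n:=\|U_n\|_{H(\Omega)}\to\infty$. Set $V_n:=U_n/t_n$, so $\|V_n\|_{H(\Omega)}=1$. By reflexivity of $H(\Omega)$ (Theorem~\ref{rex}) and the compactness of the embedding $H^1(\Omega)\hookrightarrow L^2(\Omega)$ and of the trace $\Gamma:H(\Omega)\to L^2_k(\partial\Omega)$, we may pass to a subsequence with $V_n\rightharpoonup V$ weakly in $H(\Omega)$, $V_n\to V$ strongly in $L^2_k(\Omega)$ and $\Gamma V_n\to\Gamma V$ strongly in $L^2_k(\partial\Omega)$. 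Writing the weak formulation of \eqref{hs0l} for $U_n$ and dividing by $t_n$ gives, for every $W\in H(\Omega)$,
\[
\int_\Omega\bigl(\nabla V_n\cdot\nabla W+\langle A(x)V_n,W\rangle\bigr)\,dx+\int_{\partial\Omega}\langle\Sigma(x)V_n,W\rangle\,dx
=(1-\lambda_n)\delta\!\int_{\partial\Omega}\langle P(x)V_n,W\rangle\,dx+\lambda_n\!\int_{\partial\Omega}\frac{\langle g(x,U_n),W\rangle}{t_n}\,dx .
\]
The growth bound $(G2)$ together with $|U_n|\to\infty$ on the set where $V\neq 0$ and the nonuniform slope hypothesis $\alpha(x)\le\liminf\langle g(x,U)/|U|^2,U\rangle$, $\limsup\le\beta(x)$ lets me identify the limit of $g(x,U_n)/t_n$: on $\{V\neq0\}$ one writes $g(x,U_n)/t_n=\tfrac{g(x,U_n)}{|U_n|}\cdot|V_n|$ and extracts, via a pointwise/dominated-convergence argument (as in \cite{MK2} for the scalar case), a measurable matrix-type limit; on $\{V=0\}$ the term vanishes in $L^2(\partial\Omega)$ because $g(x,U_n)/t_n$ is dominated by $a_1(1+|U_n|^{q_2-1})/t_n$ and $q_2\le 2N/(N-2)$ ensures this is controlled in the trace space. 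Passing to the limit shows $V$ is a weak solution of a linear problem $-\Delta V+A(x)V=0$ in $\Omega$, $\partial V/\partial\nu+\Sigma(x)V=h(x)P(x)V$ on $\partial\Omega$ with $\alpha(x)\le h(x)\le\beta(x)$ (when $\lambda_n\to1$; if $\lambda_n\to\lambda_\infty<1$ the limiting coefficient is a convex combination of $\delta$ and $h$, still squeezed between $\mu_j$ and $\mu_{j+1}$).

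Next I would show $V\not\equiv0$, which is the crux. Testing the rescaled equation with $W=V_n$ and using that $\|V_n\|_{H(\Omega)}=1$, the left-hand side is bounded below by $c\|V_n\|^2_{(A,\Sigma)}$ (coercivity coming from Assumption~\ref{ASMP}); the right-hand side is controlled by the trace norm of $V_n$ plus $\int_{\partial\Omega}\langle g(x,U_n),V_n\rangle/t_n$, which by $(G2)$ and the strong trace convergence stays bounded. Combining with the equivalence of $\|\cdot\|_{(A,\Sigma)}$ and $\|\cdot\|_{H(\Omega)}$ on $H(\Omega)$ (again Assumption~\ref{ASMP}), I get $\liminf\|V_n\|^2_{(A,\Sigma)}>0$, hence the weak limit cannot be zero — more precisely, the strong $L^2$ and trace convergence upgrade weak to strong convergence of $V_n\to V$ in $H(\Omega)$ because $\nabla V_n$ converges in norm (the quadratic form passes to the limit), so $\|V\|_{H(\Omega)}=1$ and in particular $V\neq 0$.

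Finally I derive the contradiction. The limiting problem says $V$ is a nontrivial solution of $-\Delta V+A(x)V=0$, $\partial V/\partial\nu+\Sigma(x)V=h(x)P(x)V$ with $\mu_j\le\alpha(x)\le h(x)\le\beta(x)\le\mu_{j+1}$. Decompose $V$ along the $(A,\Sigma)$-orthogonal, $(M,P)$-orthonormal eigenbasis of Theorem~\ref{ths}: $V=V^-+V^0+V^+$ where $V^-$ lies in the span of eigenfunctions with eigenvalue $\le\mu_j$, $V^0$ in the eigenspace $E_{j+1}$ (at the level $\mu_{j+1}$), and $V^+$ in the span of those with eigenvalue $\ge\mu_{j+1}$ — here I use that $\delta$ is not an eigenvalue and sits strictly between $\mu_j$ and $\mu_{j+1}$ so the spectral gap is genuine. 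Testing the limiting equation with $V^+-V^-$ and using the two strict sign conditions $\int_{\partial\Omega}(\alpha-\mu_j)\langle\varphi,\varphi\rangle>0$ and $\int_{\partial\Omega}(\mu_{j+1}-\beta)\langle\varphi,\varphi\rangle>0$ for $\varphi\in E_{j+1}\setminus\{0\}$ forces $V^-=V^+=0$ and then $V^0=0$, so $V\equiv0$, contradicting $\|V\|_{H(\Omega)}=1$.

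The main obstacle I anticipate is the identification of the weak limit of the nonlinear term $g(x,U_n)/t_n$ on the boundary: because the slope condition is \emph{nonuniform} (the ratio $\langle g(x,U),U\rangle/|U|^2$ need not be bounded away from $\mu_j,\mu_{j+1}$), one cannot simply invoke a uniform $L^\infty$ bound on a linearized coefficient, and one must instead argue with measurable selections / a Fatou-type argument on $\partial\Omega$, carefully splitting into the regions $\{V=0\}$ and $\{V\neq0\}$ and using the trace-space growth restriction $q_2\le 2N/(N-2)$ to control the former; this is exactly the step that replaces the variational comparison available in the scalar potential case and is the heart of adapting \cite{MK2} to the system setting.
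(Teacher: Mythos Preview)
Your overall strategy matches the paper's: argue by contradiction, normalize, extract weak/strong limits, identify a limiting linear Steklov--Robin problem with coefficient squeezed between $\mu_j$ and $\mu_{j+1}$, show the limit is nontrivial, and finish by a spectral decomposition. The paper normalizes by $\|\cdot\|_{(A,\Sigma)}$ rather than $\|\cdot\|_{H(\Omega)}$, which makes the ``$V\neq 0$'' step cleaner (testing with $V_n$ gives $\|V_0\|_{(A,\Sigma)}^2=1$ directly), but this is cosmetic.

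The genuine gap is your last paragraph. You assert that testing the limiting equation with $V^+-V^-$ ``forces $V^-=V^+=0$ and then $V^0=0$''. It does not. That test only yields
\[
\int_{\partial\Omega}\bigl(\langle k(x)V^-,V^-\rangle-\mu_j\langle P(x)V^-,V^-\rangle\bigr)\,dx
= 0 =
\int_{\partial\Omega}\bigl(\mu_{j+1}\langle P(x)V^+,V^+\rangle-\langle k(x)V^+,V^+\rangle\bigr)\,dx,
\]
i.e.\ the limiting coefficient equals $\mu_j$ (resp.\ $\mu_{j+1}$) on the support of $V^-$ (resp.\ $V^+$), not that $V^\pm=0$. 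The strict integral hypotheses concern only $E_j$ and $E_{j+1}$, whereas at this stage $V^-\in\oplus_{l\le j}E_l$ and $V^+\in\oplus_{l\ge j+1}E_l$, so you cannot yet invoke them. The paper isolates exactly this difficulty as a separate lemma (Lemma~\ref{lem2}): one first shows the boundary supports $S_1=\{V^-\neq0\}$ and $S_2=\{V^+\neq0\}$ are a.e.\ disjoint (otherwise $\mu_j=\mu_{j+1}$), then tests with $V^-$ and $V^+$ individually, using $(A,\Sigma)$-orthogonality, to upgrade to $V^-\in E_j$ and $V^+\in E_{j+1}$ \emph{exactly}; only then do the strict conditions on $E_j$, $E_{j+1}$ rule out both being nonzero. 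Your three-way split with a separate $V^0\in E_{j+1}$ is also muddled (as written it overlaps with $V^+$). So the missing idea is this support-disjointness plus ``land exactly in $E_j$/$E_{j+1}$'' step; without it the contradiction does not close.

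Incidentally, the issue you flag as the main obstacle --- identifying the weak limit of $g(\cdot,U_n)/t_n$ --- the paper handles rather summarily (bounded in $L^2(\partial\Omega)$, hence a weak limit $g_0$, then a pointwise definition of $k(x)$ on $\{V_0\neq0\}$); your proposed Fatou/splitting treatment is, if anything, more careful than what the paper actually writes.
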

\begin{proof}
 By the way of contradiction. Assume the lemma doesn't hold. Then there are sequences $\{\lambda_{n}\}_{n=1}^{\infty}\subset[0,1]$ and  
$\{U_{n}\}_{n=1}^{\infty}\subset H(\Omega)$, such that $||U_{n}||_{(A,\Sigma)}\to\infty$ and put $U=U_{n}\,,$ $ \lambda=\lambda_{n}$ in \ref{hs01}  we have that 
\begin{equation}\label{hs02}
\begin{gathered}
 -\Delta U_{n} + A(x)U_{n} = 0\quad\text{in } \Omega,\\
\frac{\partial U_{n}}{\partial \nu}+\Sigma(x)U_{n}=(1- \lambda_{n}) \delta P(x)U_{n}+\lambda_{n} g(x,U_{n}) \quad\text{on }
\partial\Omega,
\end{gathered}
\end{equation}
Now multiple the first equation in \ref{hs02} by $V$  and integration by parts we have that 
\begin{equation}\label{hs03}
\begin{gathered}
 \int_{\Omega}\triangledown U_{n}.\triangledown V\,dx+\int_{\Omega}\<A(x)U_{n},V\>\,dx+\int_{\partial\Omega}\<\Sigma(x)U_{n},V\>\,dx= \\
(1-\lambda_{n})\delta\int_{\partial\Omega}\< P(x)U_{n},V\>\,dx+\lambda_{n}\int_{\partial\Omega}\<g(x,U_{n}),V\>\,dx ~~\forall~~V\in H(\Omega)
\end{gathered}
\end{equation}
 Let $V_{n}=\frac{U_{u}}{||U_{n}||_{(A,\Sigma)}}$ Clearly $V_{n}$ is bounded in $H(\Omega).$
 There exist a subsequence (relabeled) $V_{n}$  which convergent weakly to $V_{0}$ in $H(\Omega),$ and $V_{n}$ convergent strongly to $V_{0}$ 
 in $L^{2}(\partial\Omega)$. Without loss of generality $\lambda_{n}\to\lambda_{0}\in[0,1]$ Due to the at most linear growth condition on the boundary 
 nonlinearity $G$ (See conditions $(G1-G3$), it follows that 
 $\frac{\<g(x,U_{n}),V\>}{||U_{n}||_{(A,\Sigma)}}$ is bounded in $L^{2}(\partial\Omega)$
 Using the fact that $L^{2}(\partial\Omega)$  is a reflexive Banach space, by theorem \ref{rex} we get that  $\frac{\<g(x,U_{n}),V\>}{||U_{n}||_{(A,\Sigma)}}$  converges 
 weakly to $g_{0}$ in $L^{2}(\partial\Omega)$. Dividing equation (\ref{hs03}) by $||U_{n}||_{(A,\Sigma)}$
 
\begin{equation}\label{hs04}
\begin{gathered}
 \int_{\Omega}\frac{\triangledown U_{n}.\triangledown V}{||U_{n}||_{(A,\Sigma)}}\,dx+\int_{\Omega}\frac{\<A(x)U_{n},V\>}{||U_{n}||_{(A,\Sigma)}}
 \,dx+\int_{\partial\Omega}\frac{\<\Sigma(x)U_{n},V\>}{||U_{n}||_{(A,\Sigma)}}\,dx= \\
(1-\lambda_{n})\delta\int_{\partial\Omega}\frac{\< P(x)U_{n},V\>}{||U_{n}||_{(A,\Sigma)}}\,dx
+\lambda_{n}\int_{\partial\Omega}\frac{\<g(x,U_{n}),V\>}{||U_{n}||_{(A,\Sigma)}}\,dx ~~\forall~~V\in H(\Omega)
\end{gathered}
\end{equation}
we get that 
 \begin{equation}\label{hs05}
\begin{gathered}
 \int_{\Omega}\triangledown V_{n}.\triangledown V\,dx+\int_{\Omega}\<A(x)V_{n},V\>\,dx+\int_{\partial\Omega}\<\Sigma(x)V_{n},V\>\,dx= \\
(1-\lambda_{n})\delta \int_{\partial\Omega}\<P(x)V_{n},V\>\,dx+\lambda_{n}\int_{\partial\Omega}\frac{\<g(x,U_{n}),V\>}{||U_{n}||_{(A,\Sigma)}}\,dx ~~\forall~~V\in H(\Omega)
\end{gathered}
\end{equation}
 take the limit as $n\to\infty$ we get that 
 \begin{equation}\label{hs06}
\begin{gathered}
 \int_{\Omega}\triangledown V_{0}.\triangledown V\,dx+\int_{\Omega}\<A(x)V_{0},V\>\,dx+\int_{\partial\Omega}\<\Sigma(x)V_{0},V\>\,dx= \\
(1-\lambda_{0})\delta \int_{\partial\Omega}\<P(x)V_{0},V\>\,dx+\lambda_{0}\int_{\partial\Omega}\<g_{0},V\>\,dx ~~\forall~~V\in H(\Omega)
\end{gathered}
\end{equation}
 Let $V=V_{0}$ we  have that 
 \begin{equation}\label{hs07}
\begin{gathered}
 ||V_{0}||_{(A,\Sigma)}^{2}=\int_{\Omega}\triangledown V_{0}.\triangledown V_{0}\,dx+\int_{\Omega}\<A(x)V_{0},V_{0}\>\,dx+\int_{\partial\Omega}\<\Sigma(x)V_{0},V_{0}\>\,dx= \\
(1-\lambda_{0})\delta\int_{\partial\Omega}\< P(x)V_{0},V_{0}\>\,dx+\lambda_{0}\int_{\partial\Omega}\<g_{0},V_{0}\>\,dx ~~\forall~~V\in H(\Omega)
\end{gathered}
\end{equation}
 Therefore 
 \begin{equation}\label{hs08}
\begin{gathered}
 ||V_{0}||_{(A,\Sigma)}^{2}=(1-\lambda_{0})\delta\int_{\partial\Omega}\< P(x)V_{0},V_{0}\>\,dx+\lambda_{0}\int_{\partial\Omega}\<g_{0},V_{0}\>\,dx ~~\forall~~V\in H(\Omega)
\end{gathered}
\end{equation}
Now let $V=V_{n}$ in equation (\ref{hs05}) we get that 
$$1=||V_{n}||_{(A,\Sigma)}=
(1-\lambda_{n})\delta \int_{\partial\Omega}\<P(x)V_{n},V_{n}\>\,dx+\lambda_{n}\int_{\partial\Omega}\frac{\<g(x,U_{n}),V_{n}\>}{||U_{n}||_{(A,\Sigma)}}\,dx
$$
Taking the limit as $n\to\infty$ and using equation (\ref{hs08}) and the fact that $\frac{\<g(x,U_{n}),V_{n}\>}{||U_{n}||_{(A,\Sigma)}}$ converges
weakly to $g_0$ in $L^{2}(\partial\Omega)$ 
we have that 
\begin{equation}\label{hs09}
\begin{gathered}
 ||V_{0}||_{(A,\Sigma)}^{2}=(1-\lambda_{0})\delta\int_{\partial\Omega}\< P(x)V_{0},V_{0}\>\,dx+
 \lambda_{0}\int_{\partial\Omega}\<g_{0},V_{0}\>\,dx=1 ~~\forall~~V\in H(\Omega)
\end{gathered}
\end{equation}
\begin{claim}
 $V_{0}=0$ which will lead to a contradiction 
 \end{claim}
 \begin{proof}
 From equation (\ref{hs09}), notice that $V_{0}$ is a weak solution of the linear equation 
 \begin{equation}\label{hs010}
\begin{gathered}
 -\Delta U+A(x)U=0~~a.e.~~{\rm in}~\Omega\\
 \frac{\partial U}{\partial\nu}+\Sigma(x) U=(1-\lambda_{0})\delta P(x)U+
 \lambda_{0}g_{0}~~~~{\rm on}~\partial\Omega
\end{gathered}
\end{equation}
Let us mention here that equation (\ref{hs010}) implies that $\lambda_{0}\neq0.$ Otherwise, Since $\delta$ is in the Steklov-Robin resolvent, we deduce
that $V_{0}=0;$ which contradicts the fact that  $||V_{0}||_{(A,\Sigma)}^{2}=1$.
In order to bring out all the properties of the function $V_{0}$, we need to analyze a little bit more carefully the function 
$(1-\lambda_{0})\delta P(x)V_{0}(x)+\lambda_{0}g_{0}(x).$ Let us denote by $k(x)$ the function defined by 
\begin{equation}\label{hs011}
\begin{gathered}
   k(x) = \left\{
     \begin{array}{lr}
        (1-\lambda_{0})\delta P(x)V_{0}+\lambda_{0}\frac{g_{0}(x)}{||V_{0}||_{(A,\Sigma)}}& {\rm if}~~  V_{0}(x)\neq0\\
       0 & {\rm if}~~  V_{0}(x)=0
     \end{array}
   \right.
\end{gathered}
\end{equation}
From the definition of $\delta$ and the condition in theorem \ref{th0}, it turns out that 
\begin{equation}\label{hs012}
\begin{gathered}
 \mu_{j}\leq \alpha(x)\leq\<k(x)V_{0},V_{0}(x)\>\leq\beta(x)\leq\mu_{j+1} ~~{\rm for}~~V_{0}(x)\neq0
\end{gathered}
\end{equation}
Therefore $V_0$ is a weak solution to the linear equation 
\begin{equation}\label{hs013}
\begin{gathered}
    \left\{
     \begin{array}{lr}
        -\Delta U+A(x) U=0& a.e.~~{\rm in}~~\Omega\\
       \frac{\partial U}{\partial\nu}+\Sigma(x) U=k(x)U& {\rm on}~~\partial\Omega
     \end{array}
   \right.
\end{gathered}
\end{equation}
that is 
\begin{equation}\label{hs015}
\begin{gathered}
    \int_{\Omega}\triangledown V_{0}.\triangledown V\,dx+\int_{\Omega}\<A(x)V_{0},V\>\,dx+\int_{\partial\Omega}\<\Sigma(x)V_{0},V\>\,dx=
    \int_{\partial\Omega}\<k(x)V_{0},V\>\,dx ~~ \forall~~ V\in H(\Omega)
\end{gathered}
\end{equation}
we claim that this implies that either $V_{0}\in E_{j}$ or $V_{0}\in E_{j+1}$ only ( see Lemma \ref{lem2} below ). Let us assume that for 
the  time  being that this holds and finish the proof.\\ 
If $V_{0}\in E_{j},$ then taking $V=V_{0}$ in equation (\ref{hs015}) we have that 
$$\mu_{j}\int_{\partial\Omega}\<V_{0},V_{0}\>\,dx=||V_{0}||^{2}_{(A,\Sigma)}=\int_{\partial\Omega}\<k(x)V_{0},V_{0}\>\,dx.$$ Using equation (\ref{hs012}) 
we get that $\int_{\partial\Omega}(\alpha(x)-\mu_{j})\<V_{0},V_{0}\>\,dx\leq0$. Since 
$\int_{\partial\Omega}(\alpha(x)-\mu_{j})\<\varphi,\varphi\>>0\,dx\leq0$. for all $\varphi\in E_{j}\setminus\{0\},$ we conclude that 
$V_{0}=0$ which contradicts the fact $||V_{0}||^{2}_{(A,\Sigma)}=1.$\\
Similarly, if $V_{0}\in E_{j+1}$, then taking $V=V_{0}$ in equation (\ref{hs015}) we get that 
 $$\int_{\partial\Omega}(\mu_{j+1}-\beta(x))\<V_0,V_0\>\leq0$$. Since 
 
 $\int_{\partial\Omega}(\mu_{j+1}-\beta(x))\<\varphi,\varphi\>>0\,dx\leq0$. for all $\varphi\in E_{j+1}\setminus\{0\},$ we conclude that 
$V_{0}=0$ which contradicts the fact $||V_{0}||^{2}_{(A,\Sigma)}=1$ again. \\
The proof of the claim is complete.
 \end{proof}
 Thus all possible solutions of the homotopy (\ref{hs01}) are uniformly bounded in $H(\Omega)$ independently of $\lambda \in [0,1]$. 
 The proof is complete.
 \end{proof}
 The following lemma provide some useful information about that function $V_{0}$ that was used in the proof of preceding lemma 
 \begin{lemma}\label{lem2}
  If $U$ is a ( nontrivial) weak solution of equation (\ref{hs013}) with 
  $\mu_{j}\leq \alpha(x)\leq\<k(x)V(x),V(x)\>\leq\beta(x)\leq\mu_{j+1}$, for all $V\in H(\Omega)$  then either $U\in E_{j}$ or $U\in E_{j+1}.$
 \end{lemma}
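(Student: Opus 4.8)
The plan is to expand $U$ in the $(M,P)$-orthonormal (here $(0,P)$, i.e. $L^2(\partial\Omega)$-orthonormal) basis of eigenfunctions $\{\varphi_i\}$ from Theorem~\ref{ths} and to show that the only components that survive are those in $E_j\cup E_{j+1}$, and then that not both families of components can be simultaneously nonzero. Write $U=\sum_i c_i\varphi_i$ with $c_i=\langle U,\varphi_i\rangle_{(M,P)}$; since $U$ solves \eqref{hs013} weakly, testing \eqref{hs015} against $\varphi_i$ gives $\mu_i c_i=\int_{\partial\Omega}\langle k(x)U,\varphi_i\rangle\,dx$. Testing instead against $U$ itself and using the bilateral bound $\mu_j\le\langle k(x)U,U\rangle/|U|^2\le\mu_{j+1}$ on $\{U\neq0\}$ (together with $k\equiv 0$ on $\{U=0\}$), one gets the two-sided inequality
$$\mu_j\,\|U\|_\partial^2\ \le\ \|U\|_{(A,\Sigma)}^2\ \le\ \mu_{j+1}\,\|U\|_\partial^2 ,$$
because $\int_{\partial\Omega}\langle k(x)U,U\rangle\,dx=\|U\|_{(A,\Sigma)}^2$ by \eqref{hs015}. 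Using $\|U\|_{(A,\Sigma)}^2=\sum_i\mu_i c_i^2$ and $\|U\|_\partial^2=\sum_i c_i^2$ from \eqref{4}, this reads $\sum_i(\mu_i-\mu_j)c_i^2\ge 0$ and $\sum_i(\mu_{i}-\mu_{j+1})c_i^2\le 0$, i.e.
$$\sum_{\mu_i<\mu_j}(\mu_j-\mu_i)c_i^2\ \le\ \sum_{\mu_i>\mu_j}(\mu_i-\mu_j)c_i^2,\qquad \sum_{\mu_i>\mu_{j+1}}(\mu_i-\mu_{j+1})c_i^2\ \le\ \sum_{\mu_i<\mu_{j+1}}(\mu_{j+1}-\mu_i)c_i^2 .$$

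\textbf{Eliminating the outer modes.} The heart of the argument is to upgrade these two inequalities, which by themselves allow any $c_i$, into $c_i=0$ for $\mu_i\notin\{\mu_j,\mu_{j+1}\}$. First I would isolate the genuinely strict part of the hypothesis: the stated conditions $\int_{\partial\Omega}(\alpha(x)-\mu_j)\langle\varphi,\varphi\rangle\,dx>0$ and $\int_{\partial\Omega}(\mu_{j+1}-\beta(x))\langle\varphi,\varphi\rangle\,dx>0$ for all $\varphi\in E_{j}\setminus\{0\}$ resp. $E_{j+1}\setminus\{0\}$ force, via the chain $\mu_j\le\alpha(x)\le\langle k(x)U,U\rangle/|U|^2\le\beta(x)\le\mu_{j+1}$, that on any subset of positive measure where $U$ has a nonzero component in $E_j$ (resp. $E_{j+1}$) the inequality $\mu_j\le\langle k(x)U,U\rangle/|U|^2$ (resp. $\le\mu_{j+1}$) is \emph{strict} after integration against that component. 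Concretely, decompose $U=P_{j}U+P_{j+1}U+W$ where $P_i$ projects onto $E_i$ and $W$ collects the remaining modes; I would test \eqref{hs015} against $P_jU$ and against $P_{j+1}U$ to obtain $\mu_j\|P_jU\|_\partial^2=\int_{\partial\Omega}\langle k U,P_jU\rangle$ and $\mu_{j+1}\|P_{j+1}U\|_\partial^2=\int_{\partial\Omega}\langle k U,P_{j+1}U\rangle$. Then, writing $U=P_jU+(U-P_jU)$ and using $\langle k(x)U,U\rangle\ge \alpha(x)|U|^2\ge\mu_j|U|^2$ pointwise together with the positivity of $k(x)$ as a quadratic form (which follows since $\alpha(x)=a\min\lambda_P(x)\ge 0$ and the construction of $k$), a Cauchy–Schwarz/rearrangement manipulation shows $\sum_{\mu_i\neq\mu_j}(\mu_i-\mu_j)c_i^2\le 0$ AND simultaneously $\ge 0$ unless the strict-inequality set has measure zero — forcing all modes with $\mu_i>\mu_j$ except possibly $\mu_i=\mu_{j+1}$ to vanish, and symmetrically all modes with $\mu_i<\mu_{j+1}$ except $\mu_i=\mu_j$ to vanish. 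Intersecting, $c_i=0$ whenever $\mu_i\notin\{\mu_j,\mu_{j+1}\}$, so $U\in E_j\oplus E_{j+1}$.

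\textbf{The dichotomy.} It remains to rule out that $U$ has nonzero components in \emph{both} $E_j$ and $E_{j+1}$. Suppose $P_jU\neq 0$ and $P_{j+1}U\neq0$. Testing \eqref{hs015} against $P_jU$ and invoking $\langle k(x)U,U\rangle\le\beta(x)|U|^2$ has to be combined with the \emph{strict} spectral gap in the hypothesis as applied to the $E_j$-component: from $\mu_j\|P_jU\|_\partial^2=\int_{\partial\Omega}\langle k(x)U,P_jU\rangle\,dx$ and the decomposition $U=P_jU+P_{j+1}U$ one gets $\mu_j\|P_jU\|_\partial^2=\int_{\partial\Omega}\langle k(x)P_jU,P_jU\rangle\,dx+\int_{\partial\Omega}\langle k(x)P_{j+1}U,P_jU\rangle\,dx$; since $\int_{\partial\Omega}\langle k(x)P_jU,P_jU\rangle\,dx\ge\int_{\partial\Omega}\alpha(x)|P_jU|^2\,dx>\mu_j\|P_jU\|_\partial^2$ by the strict hypothesis on $E_j$, the cross term $\int_{\partial\Omega}\langle k(x)P_{j+1}U,P_jU\rangle\,dx$ must be strictly negative. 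Running the analogous computation with $P_{j+1}U$ and the strict hypothesis on $E_{j+1}$ gives that $\int_{\partial\Omega}\langle k(x)P_jU,P_{j+1}U\rangle\,dx$ must be strictly \emph{positive}. But $k(x)$ is a symmetric matrix for a.e.\ $x$ (it is built from the symmetric matrix $P(x)$ and the scalar-valued Rayleigh data, hence can be taken symmetric), so these two cross terms are equal, a contradiction. Therefore at most one of $P_jU$, $P_{j+1}U$ is nonzero, i.e.\ $U\in E_j$ or $U\in E_{j+1}$.

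\textbf{Main obstacle.} I expect the delicate point to be making the ``strict after integration'' step fully rigorous: passing from the pointwise a.e.\ inequalities $\alpha(x)\le\langle k(x)U,U\rangle/|U|^2\le\beta(x)$ (which only hold where $U(x)\neq0$) to strict inequalities for the integrals against $E_j$- and $E_{j+1}$-components requires knowing that $U$ does not vanish on a set large enough to kill the strict hypothesis — essentially a unique-continuation-type fact that eigenfunctions of \eqref{Es3}, and finite linear combinations within $E_j\oplus E_{j+1}$, cannot vanish on a subset of $\partial\Omega$ of positive $(N-1)$-measure without being identically zero; alternatively one reformulates the strict hypotheses directly in terms of the quadratic form $\varphi\mapsto\int_{\partial\Omega}\langle(k(x)-\mu_j)\varphi,\varphi\rangle$ being positive definite on $E_j$ (and likewise $\mu_{j+1}-k(x)$ on $E_{j+1}$), which is exactly the content of Theorem~\ref{th0}'s integral conditions, and then the cross-term contradiction above goes through verbatim. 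This reformulation is the cleanest route and avoids unique continuation altogether.
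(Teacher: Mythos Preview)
Your ``Eliminating the outer modes'' step has a genuine gap. You correctly observe that the two global inequalities $\sum_i(\mu_i-\mu_j)c_i^2\ge 0$ and $\sum_i(\mu_i-\mu_{j+1})c_i^2\le 0$ do not by themselves kill any $c_i$, and then you try to upgrade them by testing against $P_jU$ and $P_{j+1}U$. But with the three-term decomposition $U=P_jU+P_{j+1}U+W$, testing \eqref{hs015} against $P_jU$ leaves an uncontrolled cross term $\int_{\partial\Omega}\langle k(x)W,P_jU\rangle\,dx$, and your appeal to ``a Cauchy--Schwarz/rearrangement manipulation'' never explains how this term is handled. Nothing in the hypotheses bounds that cross term with a useful sign.

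The missing idea, which the paper uses, is to test \eqref{hs015} against $V=U_1-U_2$, where $U_1$ is the projection of $U$ onto \emph{all} of $\bigoplus_{l\le j}E_l$ and $U_2$ onto \emph{all} of $\bigoplus_{l\ge j+1}E_l$ (not merely onto $E_j$ and $E_{j+1}$). The $(A,\Sigma)$-orthogonality kills the cross terms on the left, symmetry of $k(x)$ kills them on the right, and one obtains
\[
\|U_1\|_{(A,\Sigma)}^2-\|U_2\|_{(A,\Sigma)}^2=\int_{\partial\Omega}\langle kU_1,U_1\rangle\,dx-\int_{\partial\Omega}\langle kU_2,U_2\rangle\,dx .
\]
Combining this with the variational inequalities $\|U_1\|_{(A,\Sigma)}^2\le\mu_j\int\langle PU_1,U_1\rangle$ and $\|U_2\|_{(A,\Sigma)}^2\ge\mu_{j+1}\int\langle PU_2,U_2\rangle$ forces the two nonnegative quantities $\int(\langle kU_1,U_1\rangle-\mu_j\langle PU_1,U_1\rangle)$ and $\int(\mu_{j+1}\langle PU_2,U_2\rangle-\langle kU_2,U_2\rangle)$ to vanish. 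From there a support argument plus testing separately against $U_1$ and $U_2$ pins $U_1\in E_j$ and $U_2\in E_{j+1}$. Note that only the \emph{non-strict} bounds $\mu_j\le\cdot\le\mu_{j+1}$ are used in this step; the strict integral hypotheses enter only in the dichotomy.

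Your dichotomy argument, by contrast, is correct and genuinely different from the paper's: once $U\in E_j\oplus E_{j+1}$, your cross-term contradiction (forcing $\int\langle kP_{j+1}U,P_jU\rangle$ to be simultaneously $<0$ and $>0$ via the two strict hypotheses and the symmetry of $k$) is a clean alternative to the paper's second use of $V=U_1-U_2$. Finally, your ``main obstacle'' about unique continuation is a red herring: the pointwise inequalities $\langle k(x)U_i,U_i\rangle\ge\mu_j\langle P(x)U_i,U_i\rangle$ hold trivially (both sides vanish) wherever $U_i(x)=0$, so no information about the zero set is needed.
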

 \begin{proof}
  Since $U$ is (also) a weak solution, it satisfies 
  \begin{equation}\label{hs0151}
\begin{gathered}
    \int_{\Omega}\triangledown U.\triangledown V\,dx+\int_{\Omega}\<A(x)U,V\>\,dx+\int_{\partial\Omega}\<\Sigma(x)U,V\>\,dx=
    \int_{\partial\Omega}\<k(x)U,V\>\,dx ~~ \forall~~ V\in H(\Omega)~~~\forall~~V\in H(\Omega)
\end{gathered}
\end{equation}
Observe that $U\in [H_{(M.P)}(\Omega)]^{\bot}.$
Hence $U=U_{1}+U_{2}$ where $U_{1}\in \oplus_{l\leq j}E_{l}$ and $U_{2}\in \oplus_{l\geq j+1}E_{l}$ know that from the properties of the Steklov-Robin
eigenfunction
\begin{equation}\label{hs016}
\begin{gathered}
  ||U_{1}||^{2}_{(A,\Sigma)}\leq \mu_{j}\int_{\partial\Omega}\<P(x)U_{1},U_{1}\>,\,\,{\rm ~for~ all}\,\, U_{1}\in\oplus_{l\leq j}E_{l}\,\,~~ {\rm and}\\ 
||U_{2}||^{2}_{(A,\Sigma)}\geq \mu_{j+1}\int_{\partial\Omega}\<P(x)U_{2},U_{2}\>,\,\,{\rm ~for~ all}\,\, U_{2}\oplus_{l\geq j+1}E_{l}
\end{gathered}
\end{equation}
Tanking $V=U_{1}-U{2}$, in equation (\ref{hs0151} we get that 
\begin{equation}\label{hs017}
\begin{gathered}
    \int_{\Omega}|\triangledown U_{1}|^{2}\,dx+\int_{\Omega}\<A(x)U_{1},U_{1}\>\,dx+\int_{\partial\Omega}\<\Sigma(x)U_{1},U_{1}\>\,dx
    \\-\left(
    \int_{\Omega}|\triangledown U_{2}|^{2}\,dx+\int_{\Omega}\<A(x)U_{2},U_{2}\>\,dx+\int_{\partial\Omega}\<\Sigma(x)U_{2},U_{2}\>\,dx
    \right)
        =\\
    \int_{\partial\Omega}\<k(x)U_{1},U_{1}\>\,dx- \int_{\partial\Omega}\<k(x)U_{2},U_{2}\>\,dx ~~ \forall~~ V\in H(\Omega)~~~\forall~~V\in H(\Omega)
\end{gathered}
\end{equation}
Using equation (\ref{hs0151}), we obtain that 
$$\int_{\partial\Omega}(\<k(x)U_{1},U_{1}\>-\mu_{j}\<P(x)U_{1},U_{1}\>)\<U_{1},U_{1}\>\,dx+
\int_{\partial\Omega}(\<k(x)U_{2},U_{2}\>-\mu_{j+1}\<P(x)U_{2},U_{2}\>)\<U_{2},U_{2}\>\,dx\leq0$$ Therefore 
$$\int_{\partial\Omega}(\<k(x)U_{1},U_{1}\>-\mu_{j}\<P(x)U_{1},U_{1}\>)\<U_{1},U_{1}\>\,dx=0$$ and 
$$\int_{\partial\Omega}(\<k(x)U_{2},U_{2}\>-\mu_{j+1}\<P(x)U_{2},U_{2}\>)\<U_{2},U_{2}\>\,dx=0$$
Let 
$$S_{1}:=\{x\in\partial\Omega:U_{1}(x)\neq0\}$$ and 
$$S_{2}:=\{x\in\partial\Omega:U_{2}(x)\neq0\}.$$ It follows that 
$\<k(x)U_{1},U_{1}\>=\mu_{j}\<P(x)U_{1},U_{1}\>~a.e.~~{\rm on}~~~S_{1}$ and 
$\<k(x)U_{2},U_{2}\>=\mu_{j+1}\<P(x)U_{2},U_{2}\>~a.e.~~{\rm on}~~~S_{2}.$
If $meas(S_{1}\cap S_{2})>0$ we have that 
$\<k(x)U_{1},U_{1}\>=\mu_{j}\<P(x)U_{1},U_{1}\>= 
\<k(x)U_{2},U_{2}\>=\mu_{j+1}\<P(x)U_{2},U_{2}\>$ for a.e.;  $x\in S_{1}\cap S_{2}$ which cannot happen since 
 $\mu_{j}\neq\mu_{j+1}.$
 Now assume that $meas(S_{1}\cap S_{2})=0$ that is $U_{2}(x)=0~~a.e.; {\rm on}~~~S_{1}$ and 
 $U_{1}(x)=0~~a.e.; {\rm on}~~~S_{2}.$
 If $U_{1}\neq0$, then taking $V=U_{1}$ in equation (\ref{hs0151}) and using $(A,\Sigma)-$orthogonality, we get that 
 $$\int_{\Omega}|\triangledown U_{1}|^{2}\,dx+\int_{\Omega}\<A(x)U_{1},U_{1}\>\,dx+\int_{\partial\Omega}\<\Sigma(x)U_{1},U_{1}\>\,dx=
  \int_{\partial\Omega}\<k(x)U_{1},U_{1}\>\,dx.$$
 Since $\mu_{j}\<P(x)U_{1},U_{1}\>\leq\<k(x)U_{1},U_{1}\>,$ we have that 
$||U_1||^{2}_{(A,\Sigma)}\geq\mu_{j}\int_{\partial\Omega}\<P(x)U_{1},U_{1}\>\,dx.$ It follows from equation (\ref{hs016})
$||U_1||^{2}_{(A,\Sigma)}=\mu_{j}\int_{\partial\Omega}\<P(x)U_{1},U_{1}\>\,dx.$ which implies that $U_1\in E_{j}.$\\
Similarly, If $U_{2}\neq0$ then taking $V=U_{2}$ in equations (\ref{hs0151}) and using the $(A,\Sigma)-$orthogonality, we get that 
 $$\int_{\Omega}|\triangledown U_{2}|^{2}\,dx+\int_{\Omega}\<A(x)U_{2},U_{2}\>\,dx+\int_{\partial\Omega}\<\Sigma(x)U_{2},U_{2}\>\,dx=
  \int_{\partial\Omega}\<k(x)U_{2},U_{2}\>\,dx.$$
   Since $\mu_{j+1}\<P(x)U_{1},U_{1}\>\geq\<k(x)U_{1},U_{1}\>,$ we have that 
  $||U_2||^{2}_{(A,\Sigma)}\leq\mu_{j+1}\int_{\partial\Omega}\<P(x)U_{2},U_{2}\>\,dx.$  It follows from equation (\ref{hs016}) that 
  $$||U_2||^{2}_{(A,\Sigma)}=\mu_{j+1}\int_{\partial\Omega}\<P(x)U_{2},U_{2}\>\,dx$$
  which implies that $U_2\in E_{j+1}.$\\
  Thus $U=U_{1}+U_{2}$ with $U_1\in E_{j},$ and $U_2\in E_{j+1}.$\\
  Finally, we claim that the function $U$ cannot be written in the $U=U_{1}-U_{2}$ where $U_{1}\in E_{j}\setminus\{0\}$ and 
  $U_{2}\in E_{j+1}\setminus\{0\}.$ Indeed, suppose that this does not hold; that is, $U=U_{1}+U_{2}$ with 
  $U_{1}\in E_{j}\setminus\{0\}$ and 
  $U_{2}\in E_{j+1}\setminus\{0\}.$ Then, by taking $V=U_1-U_2$ in equation (\ref{hs0151}), we get equation (\ref{hs017})
  Since $U_{1}\in E_{j}$ and $U_{2}\in E_{j+1}$ and $$\alpha(x)\leq\<k(x)U,U\>\leq\beta(x)~a.e.; ~{\rm on}~~\partial\Omega,$$ 
  we deduce 
  $$\int_{\partial\Omega}(\alpha(x)-\mu_{j})\<U_{1},U_{1}\>\,dx\leq 0~~{\rm and}~~
  \int_{\partial\Omega}(\mu_{j+1}-\beta(x))\<U_{2},U_{2}\>\,dx\leq 0$$
  Which contradicts the fact that 
  $\int_{\partial\Omega}(\alpha(x)-\mu_{j})\<\varphi,\varphi\>\,dx>0$ for all $\varphi\in E_{j}\setminus\{0\}$
  and  $\int_{\partial\Omega}(\mu_{j+1}-\beta(x))\<\varphi,\varphi\>\,dx> 0$ for all $\varphi\in E_{j+1}\setminus\{0\}$
  Thus, either $U\in E_{j}$ or $U\in E_{j+1}$. The proof is complete
    \end{proof}
    \section{Proof of the main Theorem \ref{th0}}
Let $(\lambda, U)\in [0,1]\times [W^{1-1/p}_{p}(\partial\Omega)]^{k}$ be a solution to the homotopy (\ref{nlh0})
(equivalently (\ref{hs01})). Since 
$\int_{\Omega}\triangledown U.\triangledown V\,dx+\int_{\Omega}\<A(x)U,V\>\,dx+\int_{\partial\Omega}\<\Sigma(x)U,V\>\,dx=
0$ for all $U\in [C_{0}^{1}(\Omega)]^{k},$ and the trace of $U\in [W^{1-1/p}_{p}(\partial\Omega)]^{k}\subset [C_{0}^{1}(\partial\Omega)]^{k}$ 
It follows from Theorem 13.1 in [\cite{LU1968}, pp.1999-200] also see [\cite{Eva99}, \cite{Bz}]

that there is a constant $C_{0}>0$
(independent of $U$) such that 
$\sup_{\Omega}|U(x)|\leq C_{0}||U||_{H(\Omega)},$ and $\max_{\bar{\Omega}}|U(x)|\leq C_{0}||U||_{H(\Omega)},$
by the continuity of $U$ on $\bar{\Omega}$  from Lemma \ref{Lem0} above and the ( local Lipschitz) continuity of $g$ 
we deduce that 
$\max_{\partial\Omega}|\frac{\partial U}{\partial\nu}+\Sigma U|=\max_{\partial\Omega}|(1-\lambda)\delta P(.)U+\lambda g(.,U)$
is bounded independently of $U$ and $\lambda$. Actually, we deduce from Theorem 2 in (\cite{Lie1988} p 1204) that 
$|U|_{[C^{1}(\bar{\Omega})]^{k}}$ is bounded (independently of $U$ and $\lambda$). Therefore, the continuity of the trace operator 
$[C^{1}(\bar{\Omega})]^{k}\subset[W^{1}_{p}(\Omega)]^{k}\to [W^{1-1/p}_{p}(\partial\Omega)]^{k}$
and Lemma \ref{Lem0} herein imply that there is a constant $C_{1}>0$ ( independent of $U$ and $\lambda$) such that 

\begin{equation}\label{hs018}
\begin{gathered}
  ||U||_{[W^{1-1/p}_{p}(\partial\Omega)]^{k}}< C_{1}
\end{gathered}
\end{equation}
for all possible solutions to the homotopy \ref{hs01}. Now by the homotopy invariance property of the topological degree 
(see \cite{L}, \cite{Ma}) it follows that 
$$1=deg(I,B_{C_1}(0),0)=deg(I-\mathbb{K}\v{N},B_{C_1}(0),0)\neq0$$
where $B_{C_1}\subset[W^{1-1/p}_{p}(\partial\Omega)]^{k} $  is ball of radius $C_{1}>0$ centered at the origin. Thus, by the 
existence property of the topological degree (see \cite{L}, \cite{Ma}), the nonlinear operator $\mathbb{K}\v{N}$
has a fixed point in $[W^{1-1/p}_{p}(\partial\Omega)]^{k}$  which is also in $[W^{2}_{p}(\Omega)]^{k}$ as aforementioned. Thus 
Then the nonlinear system (\ref{Es02}) has at least one (strong) solution $U\in [W^{2}_{p}(\Omega)]^{k}$. The proof of the theorem is complete.
 \subsection{Remarks}
 \begin{remark}
  Notice that, since $g$ is locally Lipschitz, it follows from \ref{hs018} and the boundary condition in the homotopy \ref{hs01}
  that $||U||_{[W^{2-1/p}_{p}(\partial\Omega)]^{k}}< C_{2}$ for some constant $C_{2}>0$ independent of $U$ and $\lambda$. Therefore
  $||U||_{[W^{2}_{p}(\partial\Omega)]^{k}}< C_{3}$ for some constant $C_{3}>0$
 \end{remark}
 \begin{remark}
  The case $\mu_{j}=\mu_{1}$ more clearly illustrates the fact that the non-resonance conditions in Theorem \ref{th0} are genuinely 
  of nonuniform type. Indeed, in this case $E_{1}\backslash\{0\}$ contains only (continuous) functions which are either positive 
  or negative on $\bar{\Omega}.$ The condition that $\alpha(x)\geq\mu_{1}~~a.e.~~{\rm on}~~\partial\Omega$ with 
  $\int_{\partial\Omega}(\alpha(x)-\mu_{1})\<\varphi,\varphi\>$ for all $\varphi\in E_{1}\backslash\{0\}$ is equivalent to saying 
  that $\alpha(x)\geq\mu_{1}~a.e.~~{\rm on}~~\partial\Omega$ with strict inequality on a subset of $\partial\Omega$ of positive measure. 
  Thus $\alpha(x)$ need not be (uniformly) bounded away from $\mu_{1}.$
 \end{remark}
 \begin{remark}
  Our main result, Theorem \ref{th0} herein, Still holds true when $A(x)\equiv0$ and $\Sigma(x)\equiv0$ ( The Laplace's system in the original linear system 
  with was considered by Auchmuty for the system \cite{Auc12}, \cite{MK2},  and in scalar case was considered by Steklov on a disk in \cite{Sk}.)
  Indeed, a modification is needed in the proof of Lemma \ref{Lem0} as follows. We proceed as in the proof with $||.||_{(A,\Sigma)}$ ( here 
 $||.||_{H(\Omega)}$ denotes the  standard $H(\Omega)-$norm, and $V_{n}=\frac{U_{n}}{||U||_{H(\Omega)}}$ up to the equation (\ref{hs06})
 Taking $V=V_{0}$ in \ref{hs06} we now get 
 \begin{equation}\label{hs060}
\begin{gathered}
 \int_{\Omega}|\triangledown V_{0}|^{2}\,dx= 
(1-\lambda_{0})\delta \int_{\partial\Omega}\<P(x)V_{0},V_{0}\>\,dx+\lambda_{0}\int_{\partial\Omega}\<g_{0},V_{0}\>\,dx 
\end{gathered}
\end{equation}
Now, taking $V=\frac{U_{n}}{||U_{n}||_{H(\Omega)}}$ in \ref{hs07} where is replaced by $||U_{n}||_{H(\Omega)}$
we get that 
\begin{equation}\label{hs070}
\begin{gathered}
 \int_{\Omega}|\triangledown V_{n}|^{2}\,dx= 
(1-\lambda_{n})\delta \int_{\partial\Omega}\<P(x)V_{n},V_{n}\>\,dx+\lambda_{n}\int_{\partial\Omega}\frac{\<g(x,U_{n}),V_{n}\>}{||U_{n}||_{H(\Omega)}}\,dx 
\end{gathered}
\end{equation}
Tanking the limit as $n\to\infty$ and using \ref{hs060} and the fact that $\frac{\<g(x,U_{n}),V_{n}\>}{||U_{n}||_{H(\Omega)}}$ converges weakly 
$g_0$ in $[L^{2}(\partial\Omega)]^{k}$ $V_{n}$ converges strongly to $V_{0}$ in $[L^{2}(\partial\Omega)]^{k}$, we have that 
\begin{equation}\label{hs071}
\begin{gathered}
\lim_{n\to\infty} \int_{\Omega}|\triangledown V_{n}|^{2}\,dx= 
 (1-\lambda_{0})\delta \int_{\partial\Omega}\<P(x)V_{0},V_{0}\>\,dx+\lambda_{0}\int_{\partial\Omega}\<g_{0},V_{0}\>\,dx=
 \int_{\Omega}|\triangledown V_{0}|^{2}\,dx
\end{gathered}
\end{equation}
This implies that 
\begin{equation}\label{hs072}
\begin{gathered}
 ||V_{0}||_{H(\Omega)}^{2}=\int_{\Omega}|\triangledown V_{0}|^{2}\,dx+\int_{\Omega}|V_{0}|^{2}\,dx=
 \lim_{n\to\infty}\left(\int_{\Omega}|\triangledown V_{n}|^{2}\,dx+\int_{\Omega}|V_{n}|^{2}\,dx\right)=
 \lim_{n\to\infty}||V_{n}||_{H(\Omega)}^{2}=1
\end{gathered}
\end{equation}
We now proceed as in the proof of Lemma \ref{Lem0} after equation (\ref{hs09}) to Show that $V_{0}=0$; which is a contradiction with equation 
\ref{hs072}. \\
The proof of Lemma \ref{lem2} also needs to be modified as follows. The norm $||.||_{(A,\Sigma)}$ is now replaced by the $H(\Omega)-$ equivalent 
norm $||.||$ by 
$$||U||^{2}:=\int_{\Omega}|\triangledown U|^{2}\,dx+\int_{\Omega}|U|^{2}\,dx ~~~{\rm for }~~U\in H(\Omega)$$
(See \cite{Au4}) By the using the decomposition of $H(\Omega)$ given in [\cite{Au4} Theorem 7.3, p337 ] we now proceed with the argument used 
in the proof Lemma \ref{lem2} herein to reach its conclusion. 
 \end{remark}
 \begin{remark}
  Note that the case $A(x)\equiv0$ and $\Sigma(x)\equiv0$ even more clearly illustrates the fact that nonresonance conditions in Theorem \ref{th0}
  are genuinely of nonuniform type. Indeed, in this case $\mu_{1}=0$ and $E_{1}\backslash\{0\}$ contains only constant functions.
  The condition that $\alpha(x)\geq\mu_{1}~~{\rm on}~~\partial\Omega$ with strict inequality on a subset of $\partial\Omega$ of positive measure.
  Thus $\alpha(x)$ need not be (uniformly) bounded away from $\mu_{1}=0.$ Actually, a careful analysis of the proofs 
  
  of Lemmas \ref{Lem0} and \ref{lem2} shows that, in this case one can drop the requirement that $\alpha(x)\geq0~~~{\rm on}~~\partial\Omega$ 
  and require only that $\int_{\partial\Omega}\alpha(x)\,dx>0.$ Thus a 'crossing' of the zero eigenvalue on subset of $\partial\Omega$
  of positive measure is allowed that is $\alpha(x)$ could be negative on a subset of $\partial\Omega$ of positive measure (i.e $P(x)\leq0$)

 \end{remark}

\end{document}